\def\NAT@def@citea{\def\@citea{\NAT@separator}}
\theoremstyle{plain}
\newtheorem{theorem}{Theorem}[section]
\newtheorem{proposition}[theorem]{Proposition}
\crefname{enumi}{\unskip}{\unskip}
\theoremstyle{definition}
\newtheorem{example}[theorem]{Example}
\theoremstyle{remark}
\newtheorem{remark}{Remark}
\begin{document}


\title{Involutions on  Incidence algebras of finite Posets}
\author{Ivan Gargate}
\address{UTFPR, Campus Pato Branco, Rua Via do Conhecimento km 01, 85503-390 Pato Branco, PR, Brazil}
\email{ivangargate@utfpr.edu.br}

\author{Michael Gargate}
\address{UTFPR, Campus Pato Branco, Rua Via do Conhecimento km 01, 85503-390 Pato Branco, PR, Brazil}
\email{michaelgargate@utfpr.edu.br}

\begin{abstract}
We give various formulas to compute the number of all involutions, i.e. elements of order 2, in an incidence algebra $I(X,\mathbb{K})$, where $X$ is a finite poset (star, Y and Rhombuses) and $\mathbb{K}$ is a finite field of characteristic different from 2.  Using the techniques describing here we show an algorithm to calculate the number of involutions on any finite poset.
\end{abstract}


\keywords{Involutions; incidence algebra; poset; upper triangular matrices}

\maketitle

\section{Introduction}\label{intro}

Let  $G$ be a group. An element $g\in G$ is an involution if $g^2=e$ where $e$ is the identity element. There are a lot of studies of involutions depending on the group structure, for instance, Slowik in \cite{slowik1} describe all involutions in upper triangular groups $UT_n(\mathbb{K})$. Involutions considered as automorphims on upper matrix algebras can studied in \cite{vic1} and, specially, in the extensive book \cite{merkurjev}. Involutions on graded matrix algebras were also studied in \cite{baturin1} and \cite{thiago}. Involutions as automorphisms on incidence algebras can be studied in  \cite{ros1, ros2, spi1}, and over finitary incidence algebras in \cite{ros3, ros4}.
\newline   

Slowik in \cite{slowik1} present a formula to find the number of involutions in the group of upper triangular matrices of dimension $n$, where $n\in\mathbb{N}$. 

In this article, we generalize the results of Slowik. We present various formulas to compute the number of involutions on a broader class of algebras called incidence algebras $I(X,\mathbb{K})$ defined over finite posets $X$ were $\mathbb{K}$ is  a finite field of characteristic different from  2. With the techniques shown here one can compute the number of  involutions over any finite poset. 

Our main results are the followings theorems:

\begin{theorem}\label{t1} Let $\mathbb{K}$ be a field of characteristic different from $2$ and $|\mathbb{K}|=q$. Consider the poset $X=\{x_0,x_1, x_2, \ldots, x_n,y_1, y_2,\ldots,y_m\}$ with the relations
\begin{itemize}
\item $x_0\leq x_i $ for $i=1,2, \ldots, n$, and $x_i\leq x_j$ for $i\leq j$ with $i,j=1, 2, \ldots, n$,
\item $x_0\leq y_i $ for $i=1,2, \ldots, m$, and $y_i\leq y_j$ for $i\leq j$ with $i,j=1, 2, \ldots, n$.
\end{itemize}

Denote by $\mathcal{P}(n+1+m,\mathbb{K})$ the number of elements in the incidence algebra $I(X, \mathbb{K})$, satisfying $g^2=e$. Then $\mathcal{P}(n+1+m,\mathbb{K})$ is equal to:

$$\mathcal{P}(n+1+m,\mathbb{K})=
\mathcal{I}(n+1,\mathbb{K})\times\left[  \sum_{\substack{1\leq p\leq m\\ 2|n-p}} \binom{m}{\frac{m-p}{2}} q^{\frac{1}{4}(m^2-p^2)}\left(q^{\frac{m+p}{2}}+q^{\frac{m-p}{2}}\right) +\alpha  q^{\frac{1}{4}m^2}\cdot q^{\frac{m}{2}} \right],$$ 
where $$ \alpha=\left\{\begin{array}{cl} \displaystyle\binom{m}{\frac{m}{2}} & if \ 2|m \\  & \\ 0 & if \ 2 \nmid  m\end{array}\right.
$$
and $\mathcal{I}(n+1,\mathbb{K})$ is the number of elements in the group $UT_{n+1}(\mathbb{K})$ satisfying $g^2=e$.
\end{theorem}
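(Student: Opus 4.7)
The plan is to exploit the block decomposition that this Y-shaped poset imposes on an element of $I(X,\mathbb{K})$. Ordering the poset as $x_{0},x_{1},\ldots,x_{n},y_{1},\ldots,y_{m}$, the fact that each $x_{i}$ is incomparable with each $y_{j}$ for $i,j\ge 1$ forces every $g\in I(X,\mathbb{K})$ into the form
\[
g=\begin{pmatrix} M & V \\ 0 & B \end{pmatrix},\qquad V=\mathbf{e}_{1}\mathbf{v},
\]
where $M\in UT_{n+1}(\mathbb{K})$ acts on the $x$-chain, $B\in UT_{m}(\mathbb{K})$ acts on the $y$-chain, and the only nonzero row of $V$ (indexed by $x_{0}$) is a row vector $\mathbf{v}\in\mathbb{K}^{m}$. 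Expanding $g^{2}=I$ splits into the three conditions $M^{2}=I_{n+1}$, $B^{2}=I_{m}$, and $MV+VB=0$; writing $a:=M_{11}\in\{\pm 1\}$ and using that $M\mathbf{e}_{1}=a\mathbf{e}_{1}$, the coupling condition collapses to the linear constraint $\mathbf{v}(aI_{m}+B)=0$, so $\mathbf{v}$ must lie in the left kernel of $aI_{m}+B$.

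The next step is to enumerate the admissible triples $(M,B,\mathbf{v})$. Since $M\mapsto -M$ is a sign-flipping involution on the set of involutions in $UT_{n+1}(\mathbb{K})$, exactly $\tfrac12\mathcal{I}(n+1,\mathbb{K})$ of them have $M_{11}=1$ and the same number have $M_{11}=-1$. For $B$ I would invoke the standard diagonal-sign decomposition (essentially Slowik's argument): writing $B=D+N$ with $D$ diagonal and $N$ strictly upper triangular, the condition $B^{2}=I$ determines $N$ on the entries where the two diagonal signs agree and leaves it free on those where they differ, giving $\binom{m}{k}\,q^{k(m-k)}$ involutions with exactly $k$ diagonal entries equal to $+1$. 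To compute the left kernel of $aI_{m}+B$ I use that $\operatorname{char}\mathbb{K}\ne 2$ makes $B$ diagonalizable with eigenvalues $\pm 1$ of multiplicities $k$ and $m-k$, so $\operatorname{rank}(aI_{m}+B)$ equals the multiplicity of the eigenvalue $-a$; this gives $q^{m-k}$ choices for $\mathbf{v}$ when $a=1$ and $q^{k}$ choices when $a=-1$.

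Assembling these pieces yields
\[
\mathcal{P}(n+1+m,\mathbb{K})=\frac{\mathcal{I}(n+1,\mathbb{K})}{2}\sum_{k=0}^{m}\binom{m}{k}\,q^{k(m-k)}\bigl(q^{k}+q^{m-k}\bigr),
\]
and what remains is a combinatorial reindexing by $p:=|2k-m|$. Pairs $\{k,m-k\}$ with $k\ne m/2$ share the same positive $p$ and contribute equally, combining to the summand $\binom{m}{(m-p)/2}\,q^{(m^{2}-p^{2})/4}\bigl(q^{(m+p)/2}+q^{(m-p)/2}\bigr)$ subject to $p\equiv m\pmod 2$, while the unpaired case $k=m/2$ (possible only when $m$ is even) produces the $\alpha$-term $\binom{m}{m/2}q^{m^{2}/4}q^{m/2}$. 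The only conceptual hurdle is the block decomposition forced by the poset structure; after that the argument is linear algebra plus a rearrangement of a binomial sum, with the characteristic assumption used only to diagonalize $B$.
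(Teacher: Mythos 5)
Your proof is correct and follows essentially the same route as the paper: block decomposition into the $UT_{n+1}$ part, the $UT_m$ part, and the connecting row, with the factor $\tfrac12\mathcal{I}(n+1,\mathbb{K})$ coming from the sign of the $x_0$-entry and Slowik's $\binom{m}{k}q^{k(m-k)}$ count for the $y$-chain block. The only (cosmetic) differences are that you derive the count of admissible connecting rows by computing the left kernel of $aI_m+B$ rather than by the entry-by-entry dependent/independent dichotomy, and that you sum over $k$ and reindex to $p=|2k-m|$ at the end instead of organizing by $p$ from the start; both yield the stated formula.
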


Using the above theorem, we obtain the general case:

\begin{theorem}\label{t2} Let $\mathbb{K}$ be a field of characteristic different from $2$ and $|\mathbb{K}|=q$. Consider the poset $X=\bigcup^{s}_{k=1}X_{k}$ such that $x_0$ is the minimal element, $\bigcap^{s}_{k=1}X_{k}=\{x_0\}$ and each  $X_{k}$ is an interval with length $m_k+1$ and denote by $\mathcal{P}(X,\mathbb{K})$ the number of involutions on the  incidence algebra $I(X,\mathbb{K})$. Define the polynomials
$P(m)$ by

$$ P(m)= \displaystyle\sum_{\substack{1\leq p\leq m \\  2|m-p}} \binom{m}{\frac{m-p}{2}} q^{\frac{1}{4}(m^2-p^2)}\left(q^{\frac{m+p}{2}}+q^{\frac{m-p}{2}}\right) + \alpha q^{\frac{1}{4}m^2}\cdot q^{\frac{m}{2}} 
$$
where $$ \alpha=\left\{\begin{array}{cl} \displaystyle\binom{m}{\frac{m}{2}} & if \ 2|m \\ &\\ 0 & if \ 2 \nmid  m\end{array}\right.
$$

Then:
$$\mathcal{P}(X,\mathbb{K})=\mathcal{I} (X_1,\mathbb{K})\cdot P(m_2)\cdot P(m_3)
\cdots P(m_s),$$
where $\mathcal{I}(X_1,\mathbb{K})$ is the number of involutions on the upper triangular matrix algebra $UT_{m_1+1}(\mathbb{K})$.
\end{theorem}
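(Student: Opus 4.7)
The plan is to imitate the proof of Theorem~\ref{t1} but with $s$ chains instead of two. The key structural observation is that in the star poset $X=\bigcup_{k=1}^{s}X_k$, any two comparable elements $x\le y$ lie in a common chain $X_k$; in other words, no cross-chain comparabilities exist beyond what is forced by the apex $x_0$. Consequently, writing $f_k:=f|_{X_k}$, the map $f\mapsto(f_1,\ldots,f_s)$ is a bijection between $I(X,\mathbb{K})$ and the set of $s$-tuples $(f_1,\ldots,f_s)\in\prod_{k=1}^{s}UT_{m_k+1}(\mathbb{K})$ that share a common value $f_k(x_0,x_0)$.

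Next I would verify that $f^{2}=e$ in $I(X,\mathbb{K})$ decouples into the local conditions $f_k^{2}=e_k$ in each $UT_{m_k+1}(\mathbb{K})$. For $x\le y$ with $x,y\in X_k$, every intermediate $z$ with $x\le z\le y$ also lies in $X_k$, so $(f*f)(x,y)$ equals $(f_k*f_k)(x,y)$ and no cross-chain term can appear. Let $P^{+}(m)$ denote the number of involutions in $UT_{m+1}(\mathbb{K})$ whose $(1,1)$-entry is $+1$. Since $\operatorname{char}\mathbb{K}\ne 2$, the map $g\mapsto -g$ is a fixed-point-free involution on the set of involutions that swaps the sign of the $(1,1)$-entry, hence $\mathcal{I}(m+1,\mathbb{K})=2P^{+}(m)$. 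Combining this with the decoupling yields
$$\mathcal{P}(X,\mathbb{K})=\sum_{\epsilon\in\{\pm 1\}}\prod_{k=1}^{s}\bigl|\{f_k:f_k^{2}=e_k,\ f_k(x_0,x_0)=\epsilon\}\bigr|=2\prod_{k=1}^{s}P^{+}(m_k)=\mathcal{I}(X_1,\mathbb{K})\prod_{k=2}^{s}P^{+}(m_k).$$

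To match the stated formula it remains to identify $P^{+}(m)$ with the polynomial $P(m)$ appearing in the statement. This is read off from Theorem~\ref{t1} applied to $s=2$: the decomposition argument just performed gives $\mathcal{P}(X_1\cup X_2,\mathbb{K})=\mathcal{I}(X_1,\mathbb{K})\cdot P^{+}(m_2)$, while Theorem~\ref{t1} asserts $\mathcal{P}(X_1\cup X_2,\mathbb{K})=\mathcal{I}(X_1,\mathbb{K})\cdot P(m_2)$; comparing the two expressions gives $P(m)=P^{+}(m)$. The main technical point is justifying the decoupling cleanly: one must argue that the star shape of $X$ genuinely forbids any cross-chain contribution to $f*f$ and that the only constraint linking the $f_k$ is the shared diagonal value at $x_0$. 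Once this is secured, the counting step is routine and the identification with $P(m_k)$ via Theorem~\ref{t1} is automatic.
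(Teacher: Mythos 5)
Your proof is correct and follows essentially the same route as the paper: the paper establishes the same multiplicativity by induction on the number of chains, and each inductive step is exactly the single factor $P(m_{h+1})$ that your one-shot product decomposition produces, both arguments resting on the decoupling of $f^{2}=e$ across chains, the $g\mapsto -g$ symmetry (the paper's factor $\tfrac12$ from fixing $g_{11}=1$), and Theorem \ref{t1} to identify the per-chain factor as $P(m)$. Your explicit checks that every interval $[x,y]$ lies inside a single chain and that $P^{+}(m)=P(m)$ are precisely the points the paper leaves implicit.
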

We also consider the case of Rhombuses poset: 
\begin{theorem}[The Rhombuses poset]\label{t4} Let $\mathbb{K}$ be a field of characteristic different from $2$ and $\left|\mathbb{K}\right|=q$. Consider the Rhombuses poset  $X=\{x_0,x_1,x_2, \cdots ,x_n,x_{n+1},$ $y_1$ $,y_2,$ $\cdots, y_m\}$ with the relations:
\begin{itemize}
    \item $x_0 \leq x_1 \leq \cdots \leq x_n \leq x_{n+1}$ and $ y_1 \leq y_2 \leq \cdots \leq y_m$
    \item $x_0\leq y_1$ and $y_m \leq x_{n+1}.$
    \end{itemize}
Denote by $\mathcal{P}(X,\mathbb{K})$ the number of involutions on the  incidence algebra $I(X,\mathbb{K})$, then we have
$$
\mathcal{P}(X,\mathbb{K})  =  \displaystyle 2\sum_{\substack{1\leq a\leq m\\ 2|(n-a)}}\sum_{\substack{1\leq p\leq m\\ 2|(m-b)}}\binom{n}{\frac{n-a}{2}}\binom{m}{\frac{m-b}{2}}q^{\frac{1}{4}(n^2+m^2-a^2-b^2)}\times  \mathcal{F}(a,b) + h(n,m),
$$
where
$$\mathcal{F}(a,b)=\left(q^{m+b}+q^{m-b}\right)\cdot \left(q^{n+a}+q^{n-a}\right)+4q^{n+m+1} $$
and
$$\begin{array}{ll}h(n,m)=& \alpha_1  \displaystyle\sum_{\substack{1 \leq b \leq m \\ 2| m-b}} 2\cdot \binom{n}{\frac{n}{2}}\binom{m}{\frac{m-b}{2}}\cdot q^{\frac{1}{4}(n^2+m^2-b^2)}\cdot q^{n+m}\cdot \{q^b+q^{-b}+2q\} \\
& +\alpha_2 \displaystyle\sum_{\substack{1 \leq a \leq n \\ 2|n-a}} 2 \cdot \binom{n}{\frac{n-a}{2}}\binom{m}{\frac{m}{2}} \cdot q^{\frac{1}{4}(n^2+m^2-a^2)}\cdot q^{n+m} \cdot \{ q^a+q^{-a}+2q\} \\
& + \alpha_3 \cdot \displaystyle 2 \cdot \binom{n}{\frac{n}{2}} \binom{m}{\frac{m}{2}} q^{\frac{1}{4}(n^2+m^2)}\cdot q^{n+m}\cdot (q+1)\end{array}$$
Here $\alpha_1,\alpha_2,\alpha_3 \in \{0,1\}$ and we have the following cases:
\begin{itemize}
    \item[$(i)$] If $2\nmid n$ and $2 \nmid m$ then $\alpha_1=\alpha_2=\alpha_3=0$.
    
    \item[$(ii)$] If $2| n$ and $2 \nmid m$ then $\alpha_1=1$ and $\alpha_2=\alpha_3=0$.
    
    \item[$(iii)$] If $2\nmid n$ and $2|m$ then $\alpha_2=1$ and $\alpha_1=\alpha_3=0$.
    
    \item[$(iv)$] If $2|n$ and $2|m$ then $\alpha_1=\alpha_2=\alpha_3=1$.
\end{itemize}
\end{theorem}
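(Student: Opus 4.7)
The plan is to parameterize an element $f \in I(X,\mathbb{K})$ by the values $f(u,v)$ at comparable pairs, then fix the diagonal sign pattern (since $\mathrm{char}\,\mathbb{K}\neq 2$, every $f(z,z)\in\{\pm 1\}$) and count valid completions of the off-diagonal entries subject to $f^2=e$. I would arrange the elements of $X$ in the linear order $x_0,x_1,\dots,x_n,y_1,\dots,y_m,x_{n+1}$, so that $f$ is upper triangular with forced zeros at the pairs $(x_i,y_j)$ and $(y_j,x_k)$ for $1\le i,k\le n$ and $1\le j\le m$. Let $A$ and $B$ denote the restrictions of $f$ to the $x$-chain $\{x_0,x_1,\dots,x_n,x_{n+1}\}$ and to the $y$-path $\{x_0,y_1,\dots,y_m,x_{n+1}\}$; these share exactly the three entries $f(x_0,x_0)$, $f(x_{n+1},x_{n+1})$ and $f(x_0,x_{n+1})$.

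The equations $(f^2)(u,w)=\delta_{u,w}$ split into three groups: (a) equations internal to the $x$-chain away from $(x_0,x_{n+1})$, which coincide with the involution equations for $A$ in $UT_{n+2}(\mathbb{K})$ at those pairs; (b) the analogous equations for $B$ in $UT_{m+2}(\mathbb{K})$; (c) the single coupling equation at $(x_0,x_{n+1})$ that mixes both chains. An explicit expansion of $f^2(x_0,x_{n+1})$ gives
\[
T_A+T_B+(\epsilon_0+\epsilon_{n+1})\,f(x_0,x_{n+1})=0,
\]
where $\epsilon_0=f(x_0,x_0)$, $\epsilon_{n+1}=f(x_{n+1},x_{n+1})$, $T_A=\sum_{i=1}^n f(x_0,x_i)f(x_i,x_{n+1})$ and $T_B=\sum_{j=1}^m f(x_0,y_j)f(y_j,x_{n+1})$. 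This is the only new constraint beyond the two chain-wise involution conditions.

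Next, for each diagonal sign pattern with excess $a$ (respectively $b$) of $+1$'s over $-1$'s on the interior elements $x_1,\dots,x_n$ (respectively $y_1,\dots,y_m$), I would use the counting framework of Theorems \ref{t1} and \ref{t2} for each chain: the number of interior sign orderings is $\binom{n}{(n-a)/2}$ (resp.\ $\binom{m}{(m-b)/2}$), and the number of free strictly upper-diagonal entries contributes a factor $q^{(n^2-a^2)/4}$ (resp.\ $q^{(m^2-b^2)/4}$). The coupling equation is then analyzed case by case. If $\epsilon_0+\epsilon_{n+1}\neq 0$, it determines $f(x_0,x_{n+1})$ uniquely and leaves the two rows of boundary entries $f(x_0,x_i),f(x_i,x_{n+1}),f(x_0,y_j),f(y_j,x_{n+1})$ with the same count as for independent chain involutions, which yields the factor $(q^{n+a}+q^{n-a})(q^{m+b}+q^{m-b})$ inside $\mathcal{F}(a,b)$. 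If $\epsilon_0+\epsilon_{n+1}=0$, then $f(x_0,x_{n+1})$ is free but $T_A+T_B=0$ becomes one genuine scalar relation coupling both chains, yielding the additive correction $4q^{n+m+1}$ (the factor $4$ collecting the admissible endpoint sign choices $(\epsilon_0,\epsilon_{n+1})$).

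Finally, the overall prefactor $2$ in the double sum collects the two global endpoint-sign choices that produce the same $(a,b)$, and the auxiliary term $h(n,m)$ records the degenerate contributions $a=0$ or $b=0$, which are only available when the corresponding chain length is even; this is the source of the parity-dependent weights $\alpha_1,\alpha_2,\alpha_3$ in cases (i)--(iv). The main obstacle will be the careful bookkeeping of the coupling equation when $\epsilon_0=-\epsilon_{n+1}$: there $A$ and $B$ no longer restrict to independent involutions on their chains, so the naive product-of-chains count that worked in Theorem \ref{t2} must be corrected, and the boundary-entry contributions have to be enumerated simultaneously on both chains rather than independently. This joint enumeration is precisely what produces the coupled factor $(q^{n+a}+q^{n-a})(q^{m+b}+q^{m-b})+4q^{n+m+1}$, and properly reconciling it with the degenerate boundary strata in $h(n,m)$ is where the bulk of the bookkeeping lies.
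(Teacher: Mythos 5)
Your setup is the same as the paper's: order the poset as $x_0,x_1,\dots,x_n,y_1,\dots,y_m,x_{n+1}$, fix the $\pm1$ diagonal, apply the dependent/independent dichotomy of Theorem \ref{teo1} entrywise, and observe that the only equation mixing the two chains is the one at the pair $(x_0,x_{n+1})$, namely $T_A+T_B+(\epsilon_0+\epsilon_{n+1})f(x_0,x_{n+1})=0$. That identification, the treatment of the case $\epsilon_0=\epsilon_{n+1}$ (giving $(q^{n+a}+q^{n-a})(q^{m+b}+q^{m-b})$), and the role of $h(n,m)$ as the $a=0$ or $b=0$ strata are all correct and match the paper's block-matrix bookkeeping.

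However, your analysis of the case $\epsilon_0=-\epsilon_{n+1}$ is wrong, and it is exactly the case you flag as the crux. You assert that $T_A+T_B=0$ is ``one genuine scalar relation coupling both chains.'' It is not: by the ``if'' direction of Slowik's theorem applied to each chain separately, once the dependent entries of the $x$-chain (resp.\ $y$-path) are defined by the recursion in Theorem \ref{teo1}(iii), the sum $T_A$ (resp.\ $T_B$) vanishes identically for every choice of the free entries --- this is precisely the statement that the equation at an unequal-diagonal position imposes no condition. Hence in this case the two chains decouple completely, $f(x_0,x_{n+1})$ is genuinely free (one factor $q$), and the boundary entries contribute $q^{n}\cdot q^{m}$ for each of the four interior sign distributions, which is where $4q^{n+m+1}$ comes from. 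If $T_A+T_B=0$ were a genuine relation to be imposed, the count would be strictly smaller and would not equal $4q^{n+m+1}$; so your stated mechanism is inconsistent with the term you claim it yields. Two further misstatements compound this: the factor $4$ comes from the four choices of which interior diagonal positions carry the minority sign in $d_R$ and $d_S$, not from endpoint sign choices (there are only two with $\epsilon_0=-\epsilon_{n+1}$, and those are absorbed into the global prefactor $2$); and your closing paragraph has the dichotomy backwards --- it is when $\epsilon_0=\epsilon_{n+1}$ that the shared entry $f(x_0,x_{n+1})$ is determined jointly by $T_A+T_B$ and the restrictions $A$, $B$ fail to be separate chain involutions, while for $\epsilon_0=-\epsilon_{n+1}$ everything splits. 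The correct joint count must be redone along these lines before the formula is established.
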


Finally: 
\begin{theorem}[The $Y$ Poset]\label{t3} Let $\mathbb{K}$ be a field of characteristic different from $2$ and $\left|\mathbb{K}\right|=q$. Consider the poset $Y=\{r_1,r_2, \cdots, r_n, s_1,s_2 \cdots, s_m, t_1,t_2, \cdots t_l\}$ with the relations:
\begin{itemize}
    \item $r_1,\leq r_2 \leq \cdots \leq  r_n$, $s_1 \leq s_2 \leq \cdots \leq s_m$ and $t_1 \leq t_2 \leq \cdots\leq  t_l$.
    \item $r_n \leq s_i$ with $i=1,2,\cdots, m$ and $r_n\leq t_j$ with $j=1,2,\cdots, l$.
\end{itemize}
Denote by $\mathcal{P}(X,\mathbb{K})$ the number of involutions on the  incidence algebra $I(X,\mathbb{K})$, then we have
$$
\begin{array}{rl}
\mathcal{P}(Y,\mathbb{K}) = \!\!\!\! & \! \! \! \! \displaystyle\sum^{n}_{\substack{1\leq a \leq n\\ 2| (n-a)}  } \displaystyle\sum^{m}_{\substack{1\leq b \leq m\\ 2| (m-b)}} \displaystyle\sum^{l}_{\substack{1\leq c \leq l \\ 2|(l-c)}}\binom{ n}{ \frac{n-a}{2}}\binom{m}{ \frac{m-b}{2}}\binom{l}{ \frac{l-c}{2}}q^{\frac{1}{4}(n^2+m^2+l^2-a^2-b^2-c^2)}\cdot \mathcal{F}(a,b,c)+\\
&\\
&+h(n,m,l),\end{array}
$$
where 
$$\mathcal{F}(a,b,c)=2q^{\frac{1}{2}n(m+l)}\cdot \left\{q^{\frac{1}{2}a(b+c)}+q^{\frac{1}{2}a(b-c)}+q^{\frac{1}{2}[-a(b+c)]}+q^{\frac{1}{2}[-a(b-c)]}\right\}.
$$
and
$$\begin{array}{ll}
h(n,m,l)=\! & \! \! \! \!

\alpha_1 \displaystyle\sum_{\substack{1\leq b \leq m \\ 2|(m-b)}} \ \sum_{\substack{1\leq c \leq l \\ 2|(l-c)}}\binom{n}{\frac{n}{2}}\binom{m}{\frac{m-b}{2}}\binom{l}{\frac{l-c}{2}}q^{\left\{\frac{1}{4}\left(n^2+m^2+l^2-b^2-c^2\right)\right\}}\cdot \left\{4 q^{\frac{n(m+l)}{2}}\right\} \\ &
    
   \! \! \! \! +  \alpha_2
    \displaystyle\sum_{\substack{1\leq a \leq n \\ 2|(n-a)}} \ \sum_{\substack{1\leq c \leq l \\ 2|(l-c)}}\binom{n}{\frac{n-a}{2}}\binom{m}{\frac{m}{2}}\binom{l}{\frac{l-c}{2}}q^{\left\{\frac{1}{4}\left(n^2+m^2+l^2-a^2-c^2\right)\right\}}\times  \\ & \qquad\qquad\qquad\qquad\qquad\qquad \qquad \qquad \qquad \qquad \times 2q^{\frac{nm}{2}}\cdot \left\{ q^{\frac{nl-ac}{2}}+q^{\frac{nl+ac}{2}}\right\}  \\ &
    n 
   \! \! \! \! +\alpha_3
    \displaystyle\sum_{\substack{1\leq a \leq n \\ 2|(n-a)}} \ \sum_{\substack{1\leq b \leq m \\ 2|(m-b)}}\binom{n}{\frac{n-a}{2}}\binom{m}{\frac{m-b}{2}}\binom{l}{\frac{l}{2}}q^{\left\{\frac{1}{4}\left(n^2+m^2+l^2-a^2-b^2\right)\right\}} \times \\ & \qquad\qquad\qquad\qquad\qquad\qquad \qquad \qquad \qquad \qquad \times  2q^{\frac{ln}{2}}\cdot \left\{ q^{\frac{nm-ab}{2}}+q^{\frac{nm+ab}{2}}\right\}     \\ &
    
   \! \! \! \! +\alpha_4
    \displaystyle\sum_{\substack{1\leq c \leq l \\ 2|(l-c)}} \ \binom{n}{\frac{n}{2}}\binom{m}{\frac{m}{2}}\binom{l}{\frac{l-c}{2}}q^{\left\{\frac{1}{4}\left(n^2+m^2+l^2-c^2\right)\right\}}\cdot 2  q^{\frac{n(m+l)}{2}}     \\ &

    \! \! \! \!  +\alpha_5
    \displaystyle\sum_{\substack{1\leq b \leq m \\ 2|(m-b)}} \ \binom{n}{\frac{n}{2}}\binom{m}{\frac{m-b}{2}}\binom{l}{\frac{l}{2}}q^{\left\{\frac{1}{4}\left(n^2+m^2+l^2-b^2\right)\right\}}\cdot  2q^{\frac{n(m+l)}{2}}    \\ &
    
   \! \! \! \! + \alpha_6
    \displaystyle\sum_{\substack{1\leq a \leq l \\ 2|(n-a)}} \ \binom{n}{\frac{n-a}{2}}\binom{m}{\frac{m}{2}}\binom{l}{\frac{l}{2}}q^{\left\{\frac{1}{4}\left(n^2+m^2+l^2-a^2\right)\right\}}\cdot 2 q^{\frac{n(m+l)}{2}}  \\ &
    
   \! \! \! \! +
    \alpha_7
    \displaystyle \binom{n}{\frac{n}{2}}\binom{m}{\frac{m}{2}}\binom{l}{\frac{l}{2}}q^{\left\{\frac{1}{4}\left(n^2+m^2+l^2 \right)\right\}}\cdot  q^{\frac{n(m+l)}{2}}.
\end{array}$$
Here $\alpha_1,\alpha_2,\cdots ,\alpha_7 \in \{0,1\}$ and we have the following cases:
\begin{itemize}
    \item [$(i)$] If $2\nmid n$ m $2\nmid m$ and $2\nmid l$ then $\alpha_1=\alpha_2= \cdots =\alpha_7=0$.
    \item[$(ii)$] If $2|n$, $2\nmid m$ and $2\nmid l$ then $\alpha_1=1$ and $\alpha_2,\alpha_3=\cdots =\alpha_7=0$.
    \item[$(iii)$] If $2\nmid n$, $2| m$ and $2\nmid l$ then $\alpha_2=1$ and $\alpha_1,\alpha_3=\alpha_4 \cdots =\alpha_7=0$.
    
    \item[$(iv)$] If $2\nmid n$, $2 \nmid m$ and $2| l$ then $\alpha_3=1$ and $\alpha_1,\alpha_2=\alpha_4 \cdots =\alpha_7=0$.
    
    \item[$(v)$] If $2| n$, $2| m$ and $2\nmid l$ then $\alpha_1=\alpha_2=\alpha_4=1$ and $\alpha_3=\alpha_5=\alpha_6=\alpha_7=0$.
    
    \item[$(vi)$] If $2| n$, $2\nmid m$ and $2| l$ then $\alpha_1=\alpha_3=\alpha_5=1$ and $\alpha_2=\alpha_4=\alpha_6=\alpha_7=0$.

    \item[$(vii)$] If $2\nmid n$, $2| m$ and $2 | l$ then $\alpha_2=\alpha_3=\alpha_6=1$ and $\alpha_1=\alpha_4=\alpha_5=\alpha_7=0$.
    
    \item[$(viii)$] Finally, if $2| n$, $2| m$ and $2| l$ then $\alpha_1=\alpha_2= \cdots = \alpha_7=1.$ 
    
\end{itemize}
\end{theorem}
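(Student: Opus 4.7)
The plan is to realize any element $g \in I(Y, \mathbb{K})$ in the block form dictated by the Y poset, reduce $g^2 = e$ to a decoupled system of matrix equations on the blocks, count the solutions in each block by diagonal signature (as already carried out for chains and stars in Theorems~\ref{t1} and~\ref{t2}), and then sum the resulting products.

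First I would order the elements as $r_1,\ldots,r_n,s_1,\ldots,s_m,t_1,\ldots,t_l$. Transitivity gives $r_i\le s_j$ and $r_i\le t_k$ for all admissible indices, while no relation holds between any $s_j$ and any $t_k$; hence every $g\in I(Y,\mathbb{K})$ has the form
$$
g \;=\; \begin{pmatrix} R & A & B \\ 0 & S & 0 \\ 0 & 0 & T \end{pmatrix},
$$
with $R\in UT_n(\mathbb{K})$, $S\in UT_m(\mathbb{K})$, $T\in UT_l(\mathbb{K})$ and $A\in M_{n\times m}(\mathbb{K})$, $B\in M_{n\times l}(\mathbb{K})$ unconstrained by incidence. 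The equation $g^2=e$ then decouples into $R^2=I$, $S^2=I$, $T^2=I$, $RA+AS=0$ and $RB+BT=0$; since the last two are independent once $R,S,T$ are fixed,
$$
\mathcal{P}(Y,\mathbb{K}) \;=\; \sum_{R,S,T} N(R,S)\cdot N(R,T),\quad N(R,S):=\#\{A:RA+AS=0\}.
$$

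Next I would count each factor by signed signature. From the proofs of Theorems~\ref{t1}--\ref{t2}, the number of involutions in $UT_n(\mathbb{K})$ whose diagonal has $(n-a)/2$ entries $-1$ and $(n+a)/2$ entries $+1$ is $\binom{n}{(n-a)/2}\,q^{(n^2-a^2)/4}$, where $a\in\{-n,-n+2,\ldots,n\}$ is the signed signature; analogous formulas hold for $S$ and $T$ with signatures $b,c$. For the Sylvester-type equation $RA+AS=0$ I would conjugate $R,S$ to their diagonal forms by upper-triangular similarities (an operation which bijects the solution set and so preserves $N(R,S)$) and then observe that $A$ must intertwine $-S$ with $R$; the standard eigenspace count gives $N(R,S)=q^{(nm-ab)/2}$, and similarly $N(R,T)=q^{(nl-ac)/2}$.

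Finally I would assemble the triple sum over signed $(a,b,c)$ and reorganize by absolute values. For $|a|,|b|,|c|\ge 1$ the eight sign combinations collapse via $\binom{n}{(n-a)/2}=\binom{n}{(n+a)/2}$ (and likewise for $b,c$) together with the observation that $q^{(nm-ab)/2}\cdot q^{(nl-ac)/2}$ takes exactly the four values $q^{[n(m+l)\pm a(b\pm c)]/2}$, each occurring twice among the eight tuples; this produces precisely the factor $\mathcal{F}(a,b,c)$ and recovers the main triple sum. When one or more of $a,b,c$ vanishes---possible only when the corresponding chain length is even---the $\pm$-symmetry in that variable degenerates, and the contribution breaks off as a lower-dimensional piece; these boundary contributions for $a=0$, $b=0$, $c=0$ and their two- and three-fold combinations yield the seven summands of $h(n,m,l)$, with the indicators $\alpha_1,\ldots,\alpha_7$ recording exactly which of them are active in each of the eight parity cases (i)--(viii). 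The hardest part will be this last bookkeeping: verifying that the eight-fold sign collapse gives precisely $\mathcal{F}(a,b,c)$ without double counting, and that the correct subset of $\{\alpha_1,\ldots,\alpha_7\}$ is turned on in each parity case; by contrast, the Sylvester count $N(R,S)=q^{(nm-ab)/2}$ is routine once one invokes (from the proof of Theorem~\ref{t1}) that the unipotent upper-triangular group acts transitively on involutions with a fixed diagonal.
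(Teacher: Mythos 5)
Your proposal is correct, and its skeleton coincides with the paper's: the same block form $\left(\begin{smallmatrix} R & A & B \\ 0 & S & 0 \\ 0 & 0 & T\end{smallmatrix}\right)$, the same per-block counts (involutions of $UT_n$ with prescribed diagonal signature, times $q^{\frac{1}{2}(nm-ab)}$ and $q^{\frac{1}{2}(nl-ac)}$ free entries in the two cross blocks), and the same final reduction to sums over signatures. Where you differ is in execution, and in two respects your route is cleaner than the paper's. First, you justify the cross-block count by decoupling $g^2=e$ into $R^2=S^2=T^2=I$ together with the Sylvester equations $RA+AS=0$, $RB+BT=0$, and you count solutions by conjugating $R,S,T$ to diagonal form (any invertible conjugation preserves the solution count, so the upper-triangular refinement is not even needed); the paper instead extends Slowik's dependent/independent entry dichotomy (its Proposition 3.1) entrywise to the subalgebra and tabulates cases in ``matrix form.'' Second, your use of \emph{signed} signatures $a\in\{-n,\dots,n\}$, $b$, $c$, followed by collapsing the eight sign choices, produces $\mathcal{F}(a,b,c)$ in one stroke and automatically generates the degenerate contributions $a=0$, $b=0$, $c=0$ (and their pairwise and triple coincidences) as the seven summands of $h(n,m,l)$, with the correct activation pattern of $\alpha_1,\dots,\alpha_7$ determined simply by which of $n,m,l$ are even; the paper instead enumerates these parity cases by hand through many separate tables. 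Two small caveats: the count $q^{\frac{1}{4}(n^2-a^2)}$ of involutions of $UT_n$ with a fixed diagonal is Slowik's theorem (quoted in the paper), not something proved in Theorems \ref{t1}--\ref{t2}; and the transitivity of the unitriangular group on involutions with fixed diagonal, which you invoke at the end, is neither stated in the paper nor needed for your argument, since the Sylvester count already follows from the intertwining/eigenspace computation. Neither caveat affects correctness.
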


\section{Preliminaries}

In this section, we fix the notation and recall some definitions and basic facts that will be used throughout the article.

Let $\mathbb{K}$ be a field. We denoted by $X$ a nonempty finite partially ordered set (finite poset, for short) and its relation by $\leq$  . The incidence algebra $I(X,\mathbb{K})$ of $X$ over $\mathbb{K}$ is the set $I(X,\mathbb{K})=\{ f :X\times X\longrightarrow\mathbb{K}:\  f(x, y)=0 \ \text{if}\ x\not\leq y\}$, endowed with the usual product of a map by a scalar, the
usual sum of maps, and the product defined by
$$f\cdot g(x,y)=\sum_{x\leq t\leq y}f(x,t)g(t,y),$$
for any $ f, g \in \mathcal{I}(X,\mathbb{K})$.

Let $X=\{x_1,x_2,\ldots,x_n\}$ be a finite poset, the incidence algebra $I(X,\mathbb{K})$ is a finite-dimensional linear space over $\mathbb{K}$, which has basis elements labelled $e_{ij}$ for each $i,j$ for which $x_i\leq x_j$ and has multiplication defined via

$$
e_{ij}e_{kl}=\left\{\begin{array}{cc}
e_{ij}\ &  \text{if}\ j=k\\
0\ &  \text{if}\ j\neq k
\end{array}
\right.
$$

It is not hard to see that we cant identify $I(X,\mathbb{K})$ with a sub-algebra of the full matrix algebra $M_n(\mathbb{K})$.

\begin{example}If $X=\{x_1, x_2,\ldots,x_n\}$ and the relation: $x_i\leq x_j$ if $i\leq j$. This   poset is called chain of length $n$. The Hasse diagram of this poset is

\begin{center}
	{\resizebox*{0.6cm}{!}{\includegraphics{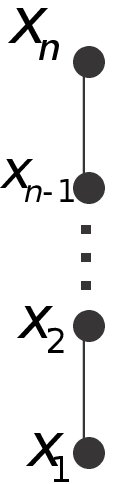}}}
	\label{graph1}
\end{center}

and  $I(X,\mathbb{K})\cong UT_n(\mathbb{K})$ is the algebra of all upper triangular matrices.
\end{example}

\begin{example}\label{example} For $X=\{x_1, x_2, x_3, x_4\}$ define the relation $x_1\leq x_3$, $x_1\leq x_4$, $x_2\leq x_3$,
$x_2\leq x_4$. This is a poset with Hasse diagram\\

\begin{figure}[H]
\centering
{\resizebox*{2.5cm}{!}{\includegraphics{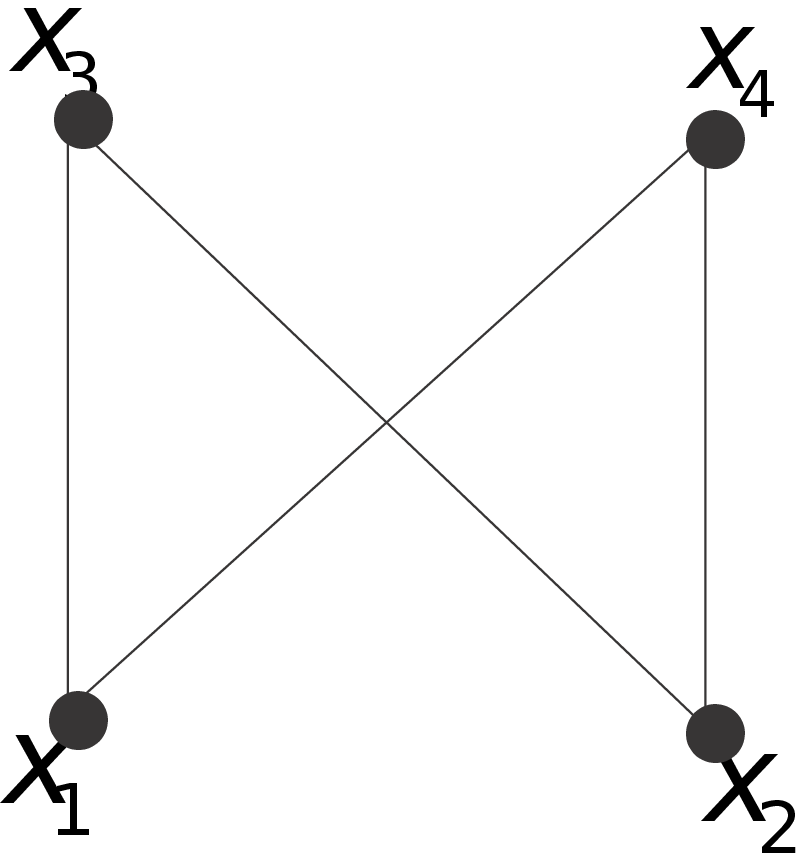}}}
 \label{graph2}
\end{figure}
and the incidence algebra $I(X,\mathbb{K})$ is the set of all
matrices of the form
$$\left(\begin{array}{cccc}
* & 0 & * & * \\
0 & * & * & * \\
0 & 0 & * & 0 \\
0 & 0 & 0 & *
\end{array}\right)
$$
\end{example}

\begin{remark} When $X$ is a finite poset, $I(X,\mathbb{K})$ will be a sub-algebra of the upper
triangular matrices (as we have seen in the last examples above), i.e. matrices of
a particular size and shape.
\end{remark}

For more facts about incidence algebras we recommend the book \cite{spi2}. Let be $g \in I(X,\mathbb{K})$, then $g$ is an involution if $g^2=e$,with $e(u,u)=1$ for all $u\in X$ and $e(u,v)=0$ if $u\neq v$. In this case we have  $g(u,u)=\pm 1$ for all $u \in X$.

Slowik in \cite{slowik1} calculated the number of involutions in the upper triangular matrices (or over incidence algebra when $X$ is a finite  chain). If fixed $d=(d_1,d_2,\cdots, d_n)$ with\\ $d_i=\pm 1$, a diagonal of an involution upper triangular matrix, then there are\\ $\Delta(d)=\frac{1}{4}[n^2-p^2]$ independent positions on this matrix, where $\left|\displaystyle\sum^n_{i=1}d_i\right|=p$, with this observation Slowik in  \cite{slowik1} proved the following:

\begin{theorem}[Slowik \cite{slowik1}]\label{teo1} Let $\mathbb{K}$ a field of characteristic different from $2$ and let $G$ the group $UT_n(\mathbb{K})$ for some $n\in \mathbb{N}$. A matrix $g \in G$ is an involution if and only if $g$ is described by the following statements:
\begin{itemize}
\item[$(i)$] For all $a\leq j\leq n$ we have $g_{ii}\in \{1,-1\}$.
\item[$(ii)$] For all pairs of indices $1\leq i \leq j \leq n$ such that $g_{ii}=-g_{jj}$, the coefficients $g_{ij}$ may be arbitrary.
\item[$(iii)$] For all pairs of indices $1\leq i\leq j \leq n$ such that $g_{ii}=g_{jj}$ we have
$$g_{ij}=\left\{\begin{array}{ll} -(2g_{ii})^{-1}\displaystyle\sum^{j-1}_{p=i+1} g_{ip}g_{pj} & if \ \ j>i+1 \\ 0 & if \ \ j=i+1 \end{array}\right.$$ 
\end{itemize}
\end{theorem}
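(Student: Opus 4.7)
The approach is to analyze $g^2=e$ entry by entry in the upper-triangular model $I(X,\mathbb{K})\cong UT_n(\mathbb{K})$, arguing both implications by induction on the distance $j-i$.

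\emph{Forward direction.} If $g^2=e$, then $g_{ii}^2=1$; since $\mathrm{char}\,\mathbb{K}\neq 2$, this yields $g_{ii}\in\{\pm 1\}$ and establishes (i). For $i<j$, the master identity
\[ 0 = (g^2)_{ij} = (g_{ii}+g_{jj})\,g_{ij} + \sum_{i<k<j} g_{ik}g_{kj} \]
splits according to whether $g_{ii}+g_{jj}=\pm 2g_{ii}$ or $0$. When $g_{ii}=g_{jj}$, the coefficient of $g_{ij}$ is a unit, and solving for $g_{ij}$ produces the formula in (iii); the case $j=i+1$ forces $g_{ij}=0$ because the sum is empty. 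When $g_{ii}=-g_{jj}$, the coefficient of $g_{ij}$ vanishes and the master identity places no restriction on $g_{ij}$, which is therefore free, yielding (ii).

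\emph{Reverse direction.} Assuming (i)--(iii), the diagonal equations $(g^2)_{ii}=g_{ii}^2=1$ are immediate. For $i<j$ with $g_{ii}=g_{jj}$, substituting (iii) into the master identity yields $(g^2)_{ij}=2g_{ii}g_{ij}+\sum g_{ik}g_{kj}=0$ by direct cancellation. The remaining case $g_{ii}=-g_{jj}$ requires the compatibility identity
\[ \sum_{i<k<j} g_{ik}g_{kj}=0, \]
which must follow automatically from (i) and (iii). This is the main obstacle. I would prove it by induction on $j-i$, with the base $j=i+1$ vacuous. For the step, partition $\{k:i<k<j\}$ into $A=\{k:g_{kk}=g_{ii}\}$ and $B=\{k:g_{kk}=g_{jj}\}$: for $k\in A$, $g_{ik}$ is determined by (iii) at $(i,k)$ and $g_{kj}$ is a free parameter, while for $k\in B$ the roles swap. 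Expanding the determined entries using (iii), invoking the inductive hypothesis at pairs of strictly smaller distance, and re-indexing, the contributions should pair off and cancel.

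\emph{Alternative.} A conceptually cleaner route, which I would pursue if the combinatorial cancellation becomes unwieldy, is to build inductively a unipotent $u\in UT_n^1(\mathbb{K})$ such that $g=uDu^{-1}$, where $D=\mathrm{diag}(g_{11},\ldots,g_{nn})$. Solving $gu=uD$ entry by entry determines $u_{ij}$ uniquely when $g_{ii}\neq g_{jj}$ (absorbing the free entries of (ii)) and reduces to the relation in (iii) when $g_{ii}=g_{jj}$; once $u$ has been constructed, $g^2=uD^2u^{-1}=e$ is automatic, sidestepping the compatibility identity entirely.
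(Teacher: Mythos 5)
This theorem is quoted by the paper from Slowik's article \cite{slowik1} and is not proved anywhere in the present text, so there is no in-paper argument to compare against; your proposal has to be judged on its own. Your forward direction is complete and correct: the master identity $(g^2)_{ij}=(g_{ii}+g_{jj})g_{ij}+\sum_{i<k<j}g_{ik}g_{kj}$ together with $g_{ii}^2=1$ and the invertibility of $2$ gives (i)--(iii) immediately, and you correctly observe that when $g_{ii}=-g_{jj}$ the equation constrains the intermediate entries rather than $g_{ij}$.

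The only place where you stop short of a proof is the compatibility identity $S_{ij}:=\sum_{i<k<j}g_{ik}g_{kj}=0$ for $g_{ii}=-g_{jj}$, which you leave at ``the contributions should pair off and cancel.'' Your induction does close, but the cancellation is not a literal term-by-term pairing, so let me record how it goes. Writing $A=\{k:g_{kk}=g_{ii}\}$, $B=\{k:g_{kk}=g_{jj}\}$ and expanding $g_{ik}$ for $k\in A$ and $g_{kj}$ for $k\in B$ via (iii), one gets
\[
-2g_{ii}S_{ij}=\sum_{\substack{u<v\\ v\in A}}g_{iu}g_{uv}g_{vj}-\sum_{\substack{u<v\\ u\in B}}g_{iu}g_{uv}g_{vj}=\Sigma_{AA}-\Sigma_{BB},
\]
since the mixed terms with $u\in B$, $v\in A$ occur in both sums and cancel, while those with $u\in A$, $v\in B$ occur in neither. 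Now the inductive hypothesis applied to the mixed pairs $(u,j)$ with $u\in A$ gives $\sum_{u<v<j,\,v\in A}g_{uv}g_{vj}=-\sum_{u<v<j,\,v\in B}g_{uv}g_{vj}$, whence $\Sigma_{AA}=-\Sigma_{AB}$; symmetrically, applying it to the pairs $(i,v)$ with $v\in B$ gives $\Sigma_{BB}=-\Sigma_{AB}$. Hence $\Sigma_{AA}=\Sigma_{BB}$ and $S_{ij}=0$. With this step filled in your argument is a complete and correct proof. Your alternative route via $g=uDu^{-1}$ is also viable, but note it does not entirely sidestep the issue: solving $gu=uD$ determines $u_{ij}$ only when $g_{ii}\neq g_{jj}$, and in the case $g_{ii}=g_{jj}$ one must still verify a consistency condition of the same flavour before concluding.
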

And 

\begin{theorem}[Slowik \cite{slowik1}] Let $\mathbb{K}$ be a finite field of characteristic from $2$. If $\left|\mathbb{K}\right|=q$, then the number of elements in the group $UT_n(\mathbb{K})$, satisfying $g^2=e$, is equal to:
$$\mathcal{I}(n,\mathbb{K})=\alpha q^{\frac{n^2}{4}}+2 \displaystyle\sum_{\substack{1\leq p \leq n \\ 2 | n-p}}\displaystyle\binom{n}{\frac{n-p}{2}}q^{\frac{n^2-p^2}{4}}, \ \ where \ \ \alpha=\left\{\begin{array}{ll} \binom{n}{ \frac{n}{2}} & if \ 2| n \\ 0 & if \ 2 \not| n.\end{array}\right.$$

\end{theorem}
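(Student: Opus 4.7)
The plan is to read off the count directly from the structure theorem, Theorem \ref{teo1}, by first fixing the diagonal of an involution and then counting the free parameters above the diagonal.

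First I would observe that, by Theorem \ref{teo1}(i), every involution $g \in UT_n(\mathbb{K})$ has $g_{ii} \in \{1,-1\}$, so its diagonal is specified by a sign vector $d=(d_1,\ldots,d_n) \in \{1,-1\}^n$. By parts (ii) and (iii) of that theorem, once $d$ is fixed, the entry $g_{ij}$ with $i<j$ is a free parameter precisely when $d_i = -d_j$; when $d_i = d_j$, the entry $g_{ij}$ is recursively determined by the strictly earlier free entries (the $j=i+1$ case giving $g_{ij}=0$, and the $j>i+1$ case giving the explicit formula $-(2g_{ii})^{-1}\sum_{p=i+1}^{j-1}g_{ip}g_{pj}$, which is well-defined because $\operatorname{char} \mathbb{K} \neq 2$). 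Hence the number of involutions with diagonal $d$ is exactly $q^{N(d)}$, where $N(d)$ is the number of unordered pairs $\{i,j\}$ with $i<j$ and opposite signs $d_i = -d_j$.

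Next I would convert $N(d)$ into the stated exponent. If $d$ has $k$ entries equal to $+1$ and $n-k$ entries equal to $-1$, then $N(d) = k(n-k)$. Setting $p = |\sum_i d_i| = |2k-n|$, a direct expansion gives $p^2 = (2k-n)^2 = n^2 - 4k(n-k)$, so
\[
N(d) = k(n-k) = \frac{n^2 - p^2}{4}.
\]
This is the point where the quantity $\Delta(d) = \tfrac14(n^2-p^2)$ mentioned in the paragraph preceding Theorem \ref{teo1} appears naturally.

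Finally I would group the $2^n$ sign vectors by the common value of $p$ and sum. For each admissible $p$ (that is, $0 \le p \le n$ with $p \equiv n \pmod 2$, so that $k=(n\pm p)/2$ is an integer), the number of diagonals with $\sum d_i = +p$ is $\binom{n}{(n+p)/2} = \binom{n}{(n-p)/2}$, and symmetrically the number with $\sum d_i = -p$ is $\binom{n}{(n-p)/2}$. These two families are disjoint when $p>0$ and coincide when $p=0$ (which requires $n$ even). Summing $q^{(n^2-p^2)/4}$ over all diagonals yields
\[
\mathcal{I}(n,\mathbb{K}) = \alpha\, q^{n^2/4} + 2 \sum_{\substack{1\le p\le n \\ 2\mid n-p}} \binom{n}{\tfrac{n-p}{2}} q^{(n^2-p^2)/4},
\]
with $\alpha = \binom{n}{n/2}$ when $2\mid n$ and $\alpha=0$ otherwise, matching the statement.

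The only non-routine step is the identity $N(d)=(n^2-p^2)/4$; the rest is bookkeeping. Since $\Delta(d)$ is already cited in the text as the number of independent positions, and Theorem \ref{teo1} already characterises which positions those are, I expect the proof reduces almost entirely to the binomial grouping in the last step, so there is no serious obstacle.
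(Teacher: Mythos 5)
Your proof is correct and follows exactly the route the paper indicates: the paper states this result as a citation of Slowik, prefaced only by the observation that a fixed diagonal $d$ with $\left|\sum_i d_i\right|=p$ leaves $\Delta(d)=\tfrac14(n^2-p^2)$ independent positions, which is precisely the identity $N(d)=k(n-k)=\tfrac14(n^2-p^2)$ you derive before grouping the $2^n$ diagonals by $p$. Nothing is missing; your write-up simply supplies the bookkeeping the paper leaves to the reference.
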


\section{How to count?}

Following the theorem (\ref{teo1}) we can state :
\begin{proposition}\label{propo1} Let be $g=(g_{ij}) \in UT_n(\mathbb{K})$ such that $g^2=e$. 
Then
\begin{itemize}
    \item[$(i)$] If $g_{ii}=g_{jj}$ then the variable $g_{ij}$ is $dependent$ in the sense that it is uniquely determined by values on the $i$ file and $j$ column (see theorem \ref{teo1}). We denote by $D$ the position of $g_{ij}$ in the matrix $g$.
    \item[$(ii)$] If $g_{ii}\neq g_{jj}$ then the variable $g_{ij}$ is $independent$ in the sense that the entry $g_{ij}$ can be set arbitrarily. We denote by $I$ the position of $g_{ij}$ in the matrix $g$.
\end{itemize}
\end{proposition}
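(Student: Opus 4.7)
The plan is to read both assertions directly off Slowik's characterization, Theorem \ref{teo1}. Since $g$ is an involution in $UT_n(\mathbb{K})$ with $\mathrm{char}\,\mathbb{K}\neq 2$, part $(i)$ of that theorem guarantees $g_{ii}\in\{1,-1\}$ for every $i$, so for any pair $1\leq i\leq j\leq n$ on the strictly upper-triangular part the two cases $g_{ii}=g_{jj}$ and $g_{ii}\neq g_{jj}$ are exhaustive and mutually exclusive, and they correspond exactly to parts $(iii)$ and $(ii)$ of Theorem \ref{teo1} respectively.

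For item $(ii)$ of the proposition, suppose $g_{ii}\neq g_{jj}$. Then part $(ii)$ of Theorem \ref{teo1} asserts that the coefficient $g_{ij}$ may be chosen arbitrarily with no constraint imposed by $g^2=e$. Hence the position $(i,j)$ is free in the sense required by the definition of $I$, and nothing further has to be checked.

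For item $(i)$, suppose $g_{ii}=g_{jj}$. Part $(iii)$ of Theorem \ref{teo1} gives the explicit formula
\[
g_{ij}=\begin{cases} -(2g_{ii})^{-1}\displaystyle\sum_{p=i+1}^{j-1} g_{ip}g_{pj} & \text{if } j>i+1,\\[4pt] 0 & \text{if } j=i+1, \end{cases}
\]
which makes sense because $\mathrm{char}\,\mathbb{K}\neq 2$ and $g_{ii}\in\{1,-1\}$, so $(2g_{ii})^{-1}$ is well defined. The right-hand side involves only the entries $g_{ip}$ with $i<p<j$ (which lie in row $i$) and $g_{pj}$ with $i<p<j$ (which lie in column $j$). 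Thus once these row-$i$ and column-$j$ entries have been specified, the value of $g_{ij}$ is completely determined, which is precisely the meaning of the label $D$.

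Consequently the proof is a two-line bookkeeping reformulation of Theorem \ref{teo1} and presents no real obstacle; its sole purpose is terminological, so that in the subsequent counting of involutions we can parametrize an involution by the choice of its diagonal in $\{1,-1\}^{n}$ together with a free choice in $\mathbb{K}$ for each position marked $I$, giving a contribution of $q^{\#I}$ for each admissible diagonal.
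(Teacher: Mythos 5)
Your proof is correct and takes the same route as the paper, which states Proposition \ref{propo1} without proof as an immediate reformulation of Slowik's Theorem \ref{teo1}: your case (ii) is exactly part $(ii)$ of that theorem, and your case (i) is part $(iii)$, whose formula involves only row-$i$ and column-$j$ entries. The only caveat worth keeping in mind is that the "can be set arbitrarily" claim relies on Theorem \ref{teo1} being an if-and-only-if characterization, which you correctly invoke.
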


\begin{example} 
Consider the matrix 
$$A=\left[\begin{array}{cc} d_1 & \theta \\ 0 & d_2 \end{array}\right]$$
such that $A^2=I_2$ with $I_2$ the identity matrix in $T_2(\mathbb{K})$, in this case:
$$A^2=\left[\begin{array}{cc} d_1^2 & \theta(d_1+d_2) \\ 0 & d_2^2 \end{array}\right]=\left[\begin{array}{cc} 1 & 0 \\ 0 & 1 \end{array}\right].$$

Denote by $d(A)=(d_1,d_2)$ the diagonal of matrix $A$, then

\begin{itemize}
\item[$(i)$] If $ d(A)=(d_1,d_2)=(1,1)$ or $d(A)=(-1,-1)$ then, by proposition \ref{propo1} $\theta$ is a  $dependent$ variable ($\theta=0$). In this case, we have only two different matrices, the identity $I_2$ and $-I_2$.
\item[$(ii)$] If $d(A)=(d_1,d_2)$ with $d_1\neq d_2$ then, by proposition \ref{propo1}, $\theta$ is an  $independent$ variable and we have $q$ different matrices with the diagonal $(1,-1)$ and $q$ different matrices with the diagonal $(-1,1)$.
\end{itemize}

In this conditions, we can represented all possibilities in the form:
$$\pm\left[\begin{array}{cc} 1 & D\\ 0 & 1 \end{array}\right],\pm \left[\begin{array}{cc} 1 & I\\ 0 & - 1 \end{array}\right].$$

In particular, we have  $2+2q$ different involutions  on $UT_2(\mathbb{K})$.
\end{example}

\begin{example}
Consider the poset $X=\{x_0,x_1,x_2,x_3\}$ with the relations $x_0\leq x_1\leq x_2$ and $x_0\leq x_3$.
The Hasse diagram of this poset is :
\begin{figure}[H]
\centering
{\resizebox*{3cm}{!}{\includegraphics{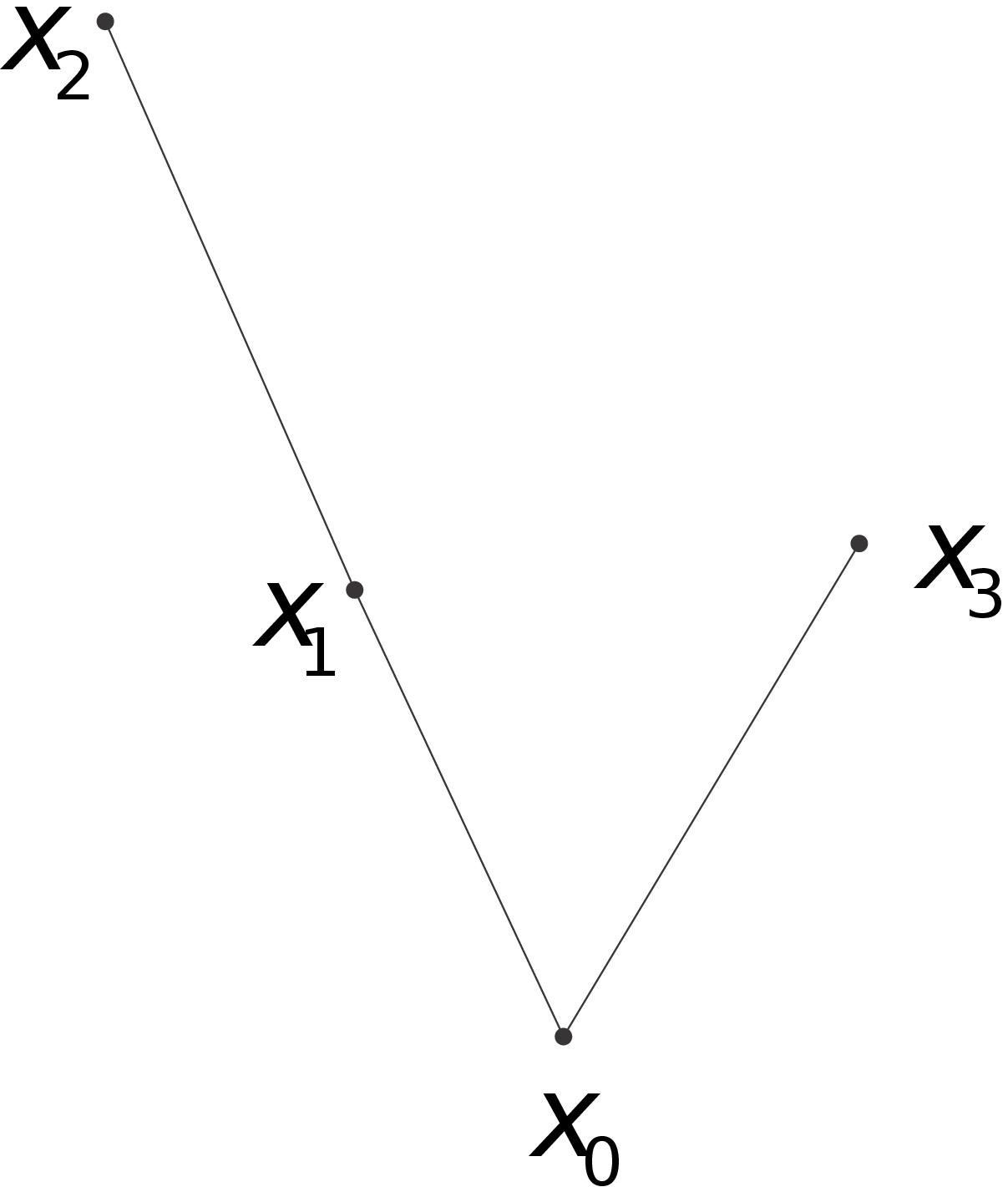}}}
\label{graph3}
\end{figure}
and the elements of their incidence algebra $I(X,\mathbb{K})$ are the form :

$$\left[\begin{array}{cccc}
    * & *& * & * \\ 0 & *&* &0  \\  0&0&*&0 \\  0&0&0&*
     
\end{array}\right]$$

Fixed the diagonal $d=(1,1,-1,-1)$ then the involutions with diagonal $d$ are in the form :

$$\left[\begin{array}{cccc}
    1 &  D & I & I\\
     0 & 1 & I & 0\\
     0& 0& -1& 0\\
     0& 0& 0& -1
\end{array}\right]$$

We can observe that there are three entries with $I$, then there are $q^3$ different involutions matrices with the fixed diagonal $d$.

\end{example} 

\begin{example}
Consider the poset of example \ref{example} If $g$ is an involution in the incidence algebra $I(X,\mathbb{K})$ then $g$ is in the form:

$$\left[\begin{array}{cc|cc}a_1 & 0 & * & * \\ 0 & a_2 & * & * \\ \hline 0 & 0& d_1 &  0 \\ 0 & 0 & 0 & d_2\end{array}\right]$$
with $a_i^2=d_j^2=1$ , $i,j=1,2$.  Then, the number of involutions can be determined by analyzing the diagonals $d_A=(a_1,a_2)$ and $d_D=(d_1,d_2)$. We observe the table:

\begin{table}[H]
\caption{Number\ involutions  with   diagonal  $d_A\times d_D$.}
\centering
\begin{tabular}{cccccc}\hline\hline
$d_A$ & & $d_D$ && & \multicolumn{1}{c}{Number of involutions} \\ [0.1ex]
\hline
$(1,1)$ && $(1,1)$ && & $1$  \\
$(-1,-1)$ && $(-1,-1)$&& & $1$  \\
$(1,-1)$ & & $(1,1)$ & && $q^2$  \\
$(-1,1)$ && $(1,1)$ && & $q^2$  \\
$(1,-1)$ && $(1,1)$ & && $q^2$  \\
$(-1,1)$ && $(-1,-1)$ && & $q^2$  \\ [1ex]
\hline
\end{tabular}
\label{table1}
\end{table}

Finally, there are $4q^2+2$ involutions in the incidence algebra $I(X,\mathbb{K})$.
\end{example}

\begin{example} Fix $m$ and $n$  odd numbers, we would like to count the number of involutions on the matrix form

$$\left[\begin{array}{c|c} R & * \\ \hline 0 & S  \end{array}\right]$$
with $R \in UT_n(\mathbb{K})$, $S \in UT_m(\mathbb{K})$ and $*$  the full matrix of order $m\times n$. Here we denote by $d_R=(d_1,d_2,\cdots, d_n)$ and $d_S=(s_1,s_2,\cdots, s_m)$  the respective diagonals. Suppose that (for $a$ and $b$ fixed) $\left|\sum^n_{i=1}d_i\right|=a$ and
$\left|\sum^m_{j=1}s_j\right|=b$, then in $d_R$:
\begin{itemize}
    \item [(i)] There are $\frac{n+a}{2}$ elements equal to $1$ and $\frac{n-a}{2}$ elements equal to $-1$ or,
    \item[(ii)] There are $\frac{n-a}{2}$ elements equal to $1$ and $\frac{n+a}{2}$ elements equal to $-1$. 
\end{itemize}
Also, in $d_S$:
\begin{itemize}
    \item [(iii)] There is $\frac{m+b}{2}$ elements equal to $1$ and $\frac{m-b}{2}$ elements equal to $-1$.
    \item[(iv)] There is $\frac{m-b}{2}$ elements equal to $1$ and $\frac{m+b}{2}$ elements equal to $-1$. 
\end{itemize}
By Slowik \cite{slowik1} we know that there is ${\Delta(d_R)}$ and ${\Delta(d_S)}$  independent entries in the matrices $R$ and $S$ respectively. The quantities of  independent entries in the full matrix $*$ depends of the quantities of products $d_i \times s_j=-1$, with $i=1,2,\cdots, n$ and $j=1,2,\cdots, m$, for example, If we consider, in $d_R$ the option (i) and in $d_S$ the option (iii) then the number os products $d_i\times s_j=-1$ is:
$$(\frac{n+a}{2})(\frac{m-b}{2})+(\frac{n-a}{2})(\frac{m+b}{2})=\frac{1}{2}(nm-ab).$$

And if one considers  in $d_R$ the option(i) and in $d_S$ the option (iv), then the number of products $d_i\times s_j=-1$ is:

$$(\frac{n+a}{2})(\frac{m+b}{2})+(\frac{n-a}{2})(\frac{m-b}{2})=\frac{1}{2}(nm+ab).$$

Then, in $*$ we have in the first situation $\frac{1}{2}(nm-ab)$  independent entries  and in the second situations we have $\frac{1}{2}(nm+ab)$  independent entries. Combining this situations with the diagonals $d_R$ and $d_S$ we have:

\begin{itemize}
    \item [(a)] If we consider (i)$\wedge$ (iii), we have
    $$\binom{n}{\frac{n-a}{2}}\binom{m}{\frac{m-b}{2}}q^{\Delta(d_T)+\Delta(d_S)}\cdot q^{\frac{1}{2}(nm-ab)},$$
    involutions, and
    \item [(b)] If we consider (i)$\wedge$(iv), we have
    $$\binom{n}{\frac{n-a}{2}}\binom{m}{\frac{m-b}{2}}q^{\Delta(d_T)+\Delta(d_S)}\cdot q^{\frac{1}{2}(nm+ab)},$$ 
involutions.
\end{itemize}

We can observe that the cases (ii) $\wedge$ (iii) and (ii) $\wedge$ (iv) are similar. 
We can summarized this results in the following figure, for example, if (i)$\wedge$ (iii) we can represented in the matrix form:

$$\begin{array}{cccc}& & & \overbrace{ \begin{tabular}{|c|c|}\hline $d_R$ & $d_S$ \\ \hline
   $\frac{n-a}{2}$ & $\frac{m-b}{2}$  \\ \hline \end{tabular}}^{Number \ of \ -1 \ in \ d_R \ and \  d_S} \\ & & &  \\   &  &
\overbrace{\begin{tabular}{|c|c|}\hline  $d_R$ & $ \frac{n+a}{2}$ \\ \hline $d_S$ & $\frac{m+b}{2}$ \\ \hline  \end{tabular}}^{Number \ of \ 1 \ in:}  &   \left[\begin{array}{c|c} \Delta(d_R) & (\frac{n+a}{2})(\frac{m-b}{2})+(\frac{n-a}{2})(\frac{m+b}{2})  \\ \hline 0 & \Delta(d_S) \end{array}\right]  \end{array}$$
where in the entries of the full matrix appears the quantities of products equal to $-1$ in $R,S$ and $*$ respectively, then in this case, see the full matrix we conclude that there are
$$\binom{n}{\frac{n-a}{2}}\binom{m}{\frac{m-b}{2}}q^{\Delta(d_R)+\Delta(d_S)+\frac{1}{2}(nm-ab),}$$
involutions.

Similarly if one considers the cases (i)$\wedge$(iv) and represented in the matrix form:

$$\begin{array}{cccc}& & & \overbrace{ \begin{tabular}{|c|c|}\hline $d_R$ & $d_S$ \\ \hline $\frac{n-a}{2}$ & $\frac{m+b}{2}$ \\ \hline   \end{tabular}}^{Number \ of \ -1 \ in:} \\ & & &  \\   &  &
\overbrace{\begin{tabular}{|c|c|} \hline $d_R$ & $ \frac{n+a}{2}$ \\ \hline $d_S$ & $\frac{m-b}{2}$ \\ \hline \end{tabular}}^{Number \ of \ 1 \ in:}  &   \left[\begin{array}{c|c} \Delta(d_R) & (\frac{n+a}{2})(\frac{m+b}{2})+(\frac{n-a}{2})(\frac{m-b}{2})  \\ \hline 0 & \Delta(d_S) \end{array}\right]  \end{array}$$
then, we conclude that there are

$$\binom{n}{\frac{n-a}{2}}\binom{m}{\frac{m-b}{2}} q^{\Delta(d_R)+\Delta(d_S)+\frac{1}{2}(nm+ab)},$$
involutions.

Finally, combining all the four cases, we conclude that there are
$$2\cdot \sum_{\substack{1\leq a \leq n \\ a \ odd}} 
\sum_{\substack{1\leq b \leq m \\ a \ odd}} \binom{n}{\frac{n-a}{2}}\binom{m}{\frac{m-b}{2}} q^{\Delta(d_R)+\Delta(d_S)}\cdot \left\{q^{\frac{1}{2}(nm+ab)}+q^{\frac{1}{2}(nm-ab)}\right\}$$ 
involutions in $UT_{n+m}(\mathbb{K})$.
\end{example}

We used this matrix representations in the proofs of theorems for minimizing counts.

\section{Proofs of results}

\begin{proof}[Proof of Theorem \ref{t1}]
The Hasse diagram of this poset is :

\begin{figure}[H]
\centering
{\resizebox*{4cm}{!}{\includegraphics{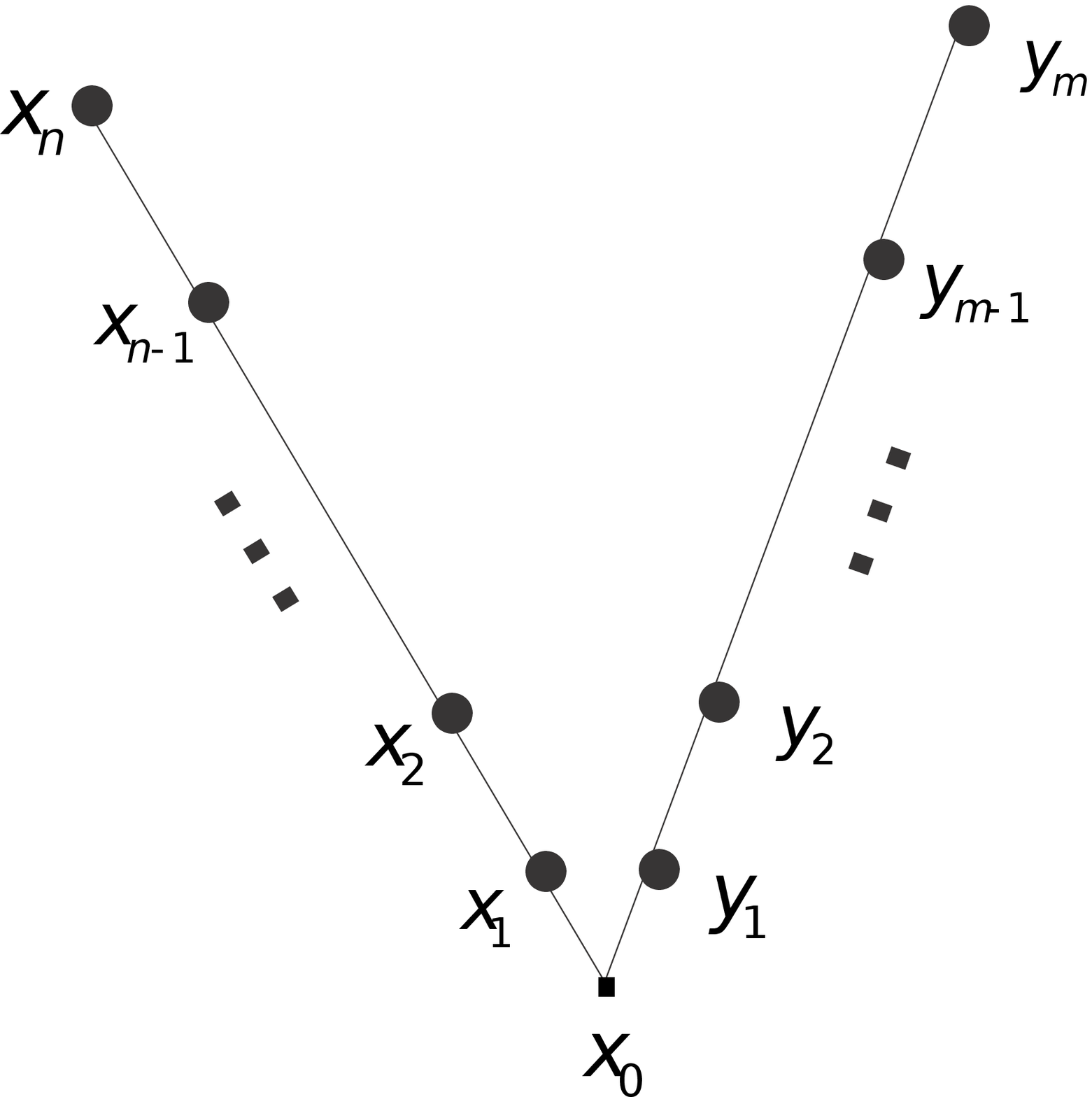}}}
 \label{graph4}
\end{figure}

and the elements of their incidence algebra $I(X,\mathbb{K})$ are the form :

$$
g=\left[\begin{array}{cccc|ccccc}
a_0 & * & * &* & \theta_1 &\theta_2 &\theta_3& \cdots  &\theta_m\\
0 & a_1 & * &* & 0  &  0 & 0  & \cdots  & 0 \\
0 & 0  & \ddots&* & 0 & 0  &  0  &  \ddots  & 0 \\
0 & 0 & 0 & a_n & 0   &  0 & 0   &  0  & 0 \\
\hline
0 & 0 & 0 & 0 & d_{11}  & *  & *   &  \cdots  & *\\
0 & 0 & 0 & 0 &  0   & d_{2}  & *   &  \cdots  &*\\
0 & 0 & 0 & 0 &  0   &  0 & d_{3}   &  \cdots  &*\\
0 & 0 & 0 & 0 &  0   &  0  & 0   &  \ddots  &  \vdots\\
0 & 0 & 0 & 0 &  0   &  0 &    &  \cdots  & d_{m}

\end{array}\right]
$$

Denote by $A=(a_{ij})$ , $D=(d_{st})$ and $\theta=(\theta_1,\theta_2,\cdots, \theta_m)$ the respective sub-matrices. Observe that, by  proposition 2.3, the dependence or independence of the entries in the sub-matrix $\theta$ depends only $a_0$ and the elements of diagonal\\  $d=(d_1,d_2,\cdots, d_m)$ in the sub-matrix $D$. Suppose that $a_0=1$ and $\left|\sum^n_{i=1}d_i\right|=p$. Then,we have two possibilities for this diagonal:
\begin{itemize}
    \item[(i)]$d$ consist of $\frac{n+p}{2}$ elements equal to $1$ and $\frac{n-p}{2}$ elements equal to $-1$, in this case we have     $\frac{n-p}{2}$ independent entries in the sub-matrix $\theta$. 
    \item[(ii)]$d$ consist of $\frac{n-p}{2}$ elements equal to $1$ and $\frac{n+p}{2}$ elements equal to $-1$, in this case, we have $\frac{n+p}{2}$ independent entries in the sub-matrix $\theta$. 
\end{itemize}

In  both case, in the sub-matrix $D$ there are $\Delta(d)$ independent entries and in the sub-matrix $A$ there are $\frac{1}{2}\mathcal{I}(n+1,\mathbb{K})$ independent entries. 

Therefore, we have
$$\frac{1}{2}\mathcal{I}(n+1,\mathbb{K})\cdot \binom{n}{ \frac{m-p}{2}}\cdot q^{\frac{1}{4}(m^2-p^2)}\cdot \{q^{\frac{m+p}{2}}+q^{\frac{m-p}{2}}\}$$
involutions with $a_0=1$ and $d$ fixed such that $\left|\sum^n_{i=1}d_i\right|=p$. 
Finally, if $m$ is odd, then $p$ is odd and hence, we have 
$$\mathcal{I}(n+1,\mathbb{K})\times\left[  \sum_{\substack{1\leq p\leq m\\  p\ odd}} \binom{m}{\frac{m-p}{2}} q^{\frac{1}{4}(m^2-p^2)}\left(q^{\frac{m+p}{2}}+q^{\frac{m-p}{2}}\right) \right] $$
involutions in $I(X,\mathbb{K})$, and if $m$ is even, then $p$ is even and we have
consider the case 
\begin{itemize}
    \item There is $\frac{m}{2}$ elements $1$ in the diagonal $d$.
\end{itemize}

In this case, we have $\Delta(A)$ independent entries in the sub-matrix $A$, $\Delta(D)=\frac{1}{4}m^2$ independent entries in the sub-matrix $D$ and $\frac{m}{2}$ independent entries in the sub-matrix $\theta$, adding this to the previous situation, we have
$$\mathcal{I}(n+1,\mathbb{K})\times\left[  \sum_{\substack{2\leq p\leq m\\ p\ even}} \binom{m}{\frac{m-p}{2}} q^{\frac{1}{4}(m^2-p^2)}\left(q^{\frac{m+p}{2}}+q^{\frac{m-p}{2}}\right) + \binom{m}{\frac{m}{2}} q^{\frac{1}{4}m^2}\cdot q^{\frac{m}{2}} \right]$$
involutions.
\end{proof}

\begin{proof}[Proof of Theorem \ref{t2}]

In this case, the Hasse diagram is
\begin{figure}[H]
\centering
{\resizebox*{5cm}{!}{\includegraphics{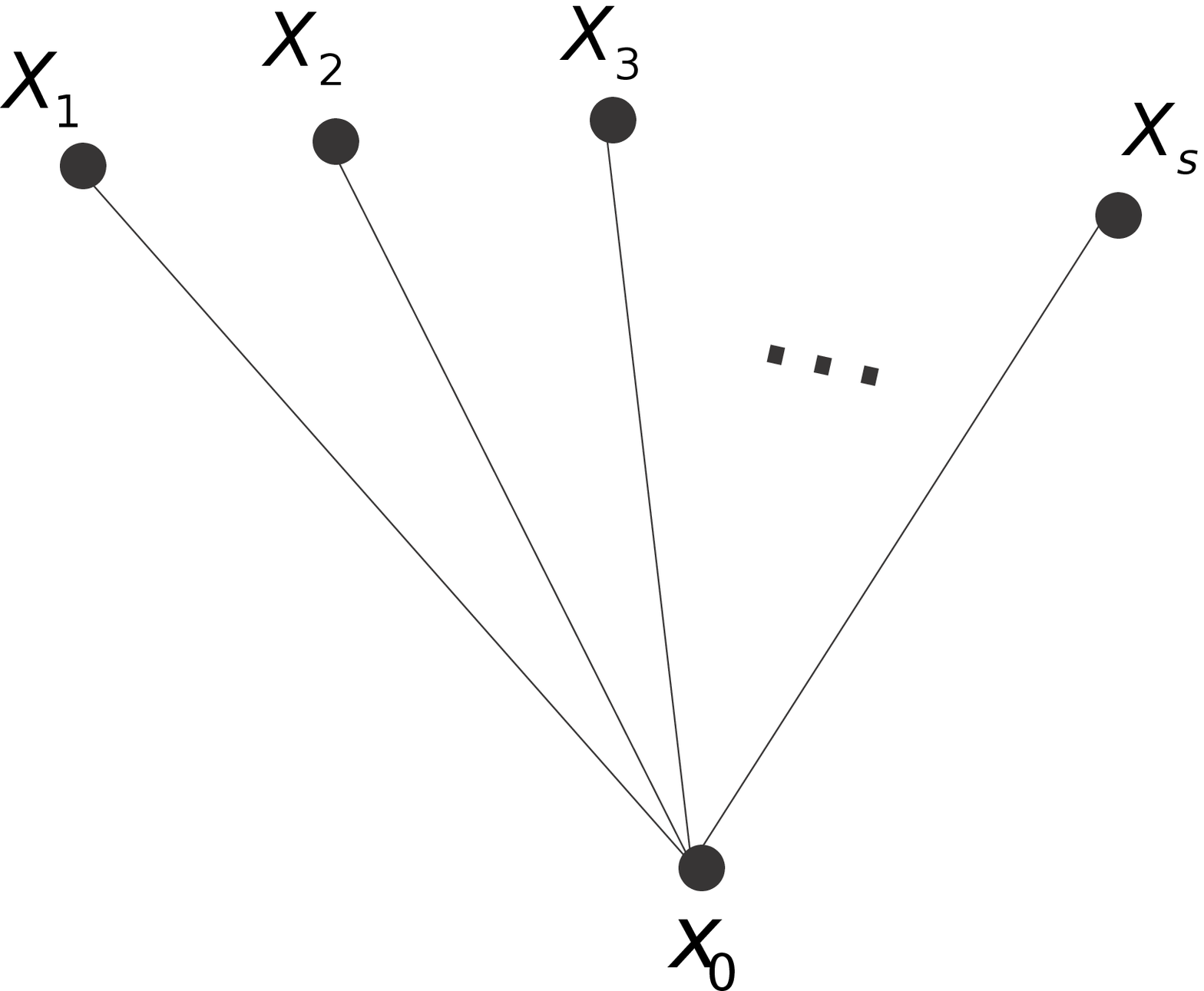}}}
\label{graph5}
\end{figure}

Suppose that we know the number of involutions for $X=\bigcup^{h}_{k=1}X_k$ and consider $Y=X \cup X_{h+1}$ with $X_{h+1}$ is an interval of length $m+1$ and $X \cap X_{h+1}=\{x_0\}$ the minimal element . Denote by $g=(g_{ij})$ a involution in the incidence algebra $I(X,\mathbb{K})$. We can observe that the involutions in the incidence algebra $I(Y,\mathbb{K})$ are in the matrix form:

$$\left[\begin{array}{c|c} g=(g_{ij}) & \begin{array}{ccccc}
\theta_1 &\theta_2 &\theta_3& \cdots  &\theta_m\\
0 & 0 & 0 & \cdots & 0 \\
\vdots & \vdots & \vdots & \ddots & \vdots \\
0 & 0 & 0 & \cdots & 0 
\end{array}    \\  

\hline 

\begin{array}{ccccc}
0 & 0 & 0 & \cdots  & 0\\
0 & 0 & 0 & \cdots & 0 \\
\vdots & \vdots & \vdots & \ddots & \vdots \\
0 & 0 & 0 & \cdots & 0 
\end{array} &

\begin{array}{ccccc}
d_1 & * & * & \cdots  & * \\
0 & d_2 & * & \cdots & * \\
\vdots & \vdots & \vdots & \ddots & \vdots \\
0 & 0 & 0 & \cdots & d_m 
\end{array}
\end{array}\right]$$

Denote by $\theta=(\theta_1,\theta_2, \cdots, \theta_m)$ and $D$ the sub-matrix with diagonal\\ $d=(d_1.d_2, \cdots d_m)$.
Fix $g_{11}=1$ and choose the diagonal $d$  such that $\left|\sum^{m}_{i=1}d_{i}\right|=p$, then there are $\Delta(D)$ independent entries in the sub-matrix $D$ and similarly to the proof of the theorem \ref{t1}, we conclude that we have
$$\frac{1}{2}\mathcal{P}(X,\mathbb{K})\cdot\binom{m}{ \frac{m-p}{2}} \cdot q^{\Delta(d)}\cdot \{q^{\frac{m+p}{2}}+q^{\frac{m-p}{2}}\}$$
involutions with $g_{11}=1$ and $\left|\sum^{m}_{i=1}d_{ii}\right|=p.$. Finally, we consider all possibilities for $p$ and conclude that there are 
$$\mathcal{P}(X,\mathbb{K})\cdot P(m)$$
involutions in $I(Y,\mathbb{K})$.
\end{proof}

\begin{proof}[Proof of Theorem \ref{t4}]
The Hasse diagram of Rhombuses poset is:

\begin{figure}[H]
\centering
{\resizebox*{4cm}{!}{\includegraphics{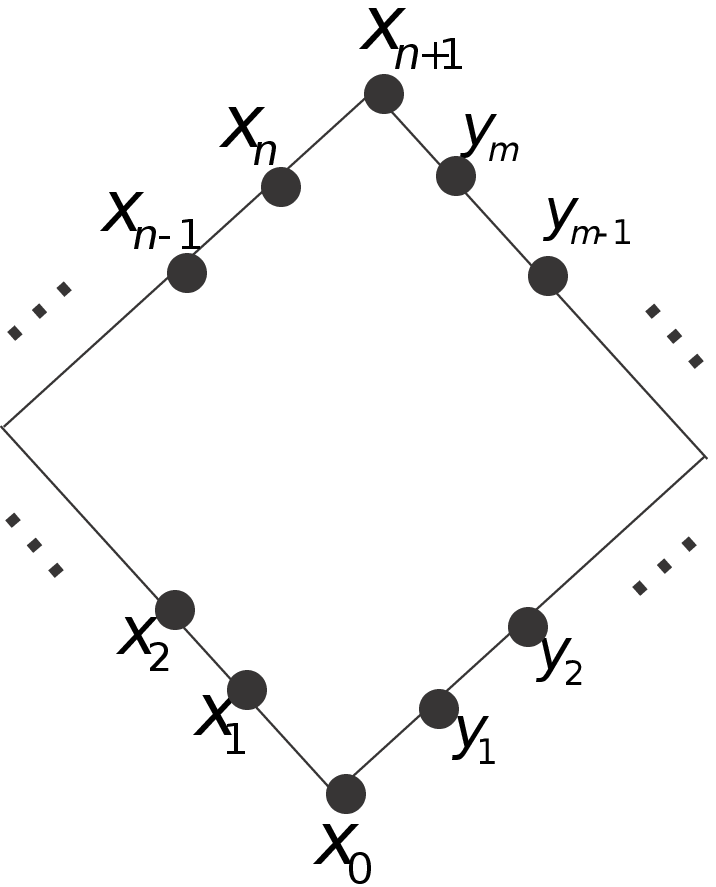}}}
\label{graph6}
\end{figure}

and the involutions elements in the incidence algebra $I(X,\mathbb{K})$ are in the form:

$$
\left[\begin{array}{c|ccc|ccccc|c}
 a_0  & \alpha_1 & \cdots &\alpha_n & \theta_1 &\theta_2 &\theta_3& \cdots  &\theta_m & \theta_{n+1}\\
\hline
0 & a_{11} & * &* & 0  &  0 & 0  & \cdots  & 0 & \beta_1 \\
0 & 0  & \ddots&* & 0 & 0  &  0  &  \ddots  & 0 &\beta_{n-1}\\
0 & 0 & 0 & a_{nn} & 0   &  0 & 0   &  0  & 0 & \beta_n\\
\hline
0 & 0 & 0 & 0 & d_{11}  & d_{12}  & d_{13}   &  \cdots  & d_{1m} & d_{1(n+1)}\\
0 & 0 & 0 & 0 &  0   & d_{22}  & d_{21}   &  \cdots  & d_{2m} & d_{2(n+1)}\\
0 & 0 & 0 & 0 &  0   &  0 & d_{33}   &  \cdots  & d_{3m}& d_{3(n+1)}\\
0 & 0 & 0 & 0 &  0   &  0  & 0   &  \ddots  &  \vdots & \vdots\\
0 & 0 & 0 & 0 &  0   &  0 &    &  \cdots  & d_{mm} & d_{m(n+1)} \\
\hline
0 & 0 & 0 & 0 & 0 & 0 & 0 & \cdots & 0 & d_{(n+1)(n+1)}

\end{array}\right]
$$

Or in block matrix form
$$\left[\begin{array}{c|c|c|c} a_0 & \alpha & \theta & \theta_{n+1} \\ \hline 0 & R & 0 & \beta \\ \hline 
0 & 0 & S & d \\ \hline 
0 & 0 & 0 & d_{(n+1)(n+1)} \end{array}\right]$$
where  $R=(a_{ik})_{i,k=1}^{n}$, $S=(d_{jl})_{j,l=1}^m$ the sub-matrices  and $d_R=(a_1,a_2,\cdots, a_n)$, $d_S=(d_{11},d_{22},\cdots d_{mm})$ the respective diagonals. Suppose that $n$ and $m$ are odd and  $\left|\sum^n_{i=1}a_i\right|=a$ and $\left|\sum^m_{j=1}d_{jj}\right|=b$, then we have the followings situations, in $d_R$:

\begin{itemize}
    \item [(i)] There is $\frac{n+a}{2}$ elements equal to $1$
 and $\frac{n-a}{2}$ elements equal to $-1$, or
    \item[(ii)] There is $\frac{n-a}{2}$ elements equal to $1$
 and $\frac{n+a}{2}$ elements equal to $-1$
 \end{itemize}
and, in $d_S$:

\begin{itemize}
    \item [(iii)] There is $\frac{m+b}{2}$ elements equal to $1$
 and $\frac{m-b}{2}$ elements equal to $-1$, or
    \item[(iv)] There is $\frac{m-b}{2}$ elements equal to $1$
 and $\frac{m+b}{2}$ elements equal to $-1$
 \end{itemize}
 
 Then, consider first that $a_0=1$, then 
 
 \begin{itemize}
    \item[(a)] If $d_{(n+1)(n+1)}=1$ then the entry correspondent to $\theta_{n+1}$ is dependent. If we have the case (i)$\wedge$(iii) then there is $\frac{n-a}{2}$ independent entries into sub-matrix $\alpha$ and $\frac{n-a}{2}$ independent entries into sub-matrix $\beta$ as well there is $\frac{m-b}{2}$ independents  entries into each sub-matrices $\theta$ and  $d$.
    In the matrix form:
    $$\begin{array}{ccccc}& & &  \overbrace{\begin{tabular}{|c|c|c|}\hline
  $d_R$  & $d_S$ & $d_{(n+1)(n+1)}$ \\ \hline
   $\frac{n-a}{2}$  & $\frac{m-b}{2}$ & $0$ \\ \hline
\end{tabular}}^{Number \ of \ -1 \ in } \\  & &  & & \\   & \overbrace{\begin{tabular}{|c|c|}\hline
  $d_R$  & $\frac{n+a}{2}$ \\ \hline  $d_S$ & $\frac{m+b}{2}$ \\ \hline
   $d_{(n+1)(n+1)}$  & $1$ \\ \hline
\end{tabular}}^{Number   \ of \  1 \ in:}  &  &  \left[\begin{array}{c|c|c|c}  a_0 & \frac{n-a}{2}  & \frac{m-b}{2} & 0  \\ \hline 0 & \Delta(d_R) & 0 & \frac{n-a}{2}    \\ \hline 0 & 0 & \Delta(d_S) &  \frac{m-b}{2} \\  \hline 0 & 0 & 0 & 0 \end{array}\right]\end{array}$$
    
    If we consider the case (i)$\wedge$(iv) then there is $\frac{m+b}{2}$  independent entries into each sub-matrices $\theta$ and $d$. In the matrix form:
    
    $$\begin{array}{ccccc}& & &  \overbrace{\begin{tabular}{|c|c|c|}\hline
  $d_R$  & $d_S$ & $d_{(n+1)(n+1)}$ \\ \hline
   $\frac{n-a}{2}$  & $\frac{m+b}{2}$ & $0$ \\ \hline
\end{tabular}}^{Number \ of \ -1 \ in } \\  & &  & & \\   & \overbrace{\begin{tabular}{|c|c|}\hline
  $d_R$  & $\frac{n+a}{2}$ \\ \hline  $d_S$ & $\frac{m-b}{2}$ \\ \hline
   $d_{(n+1)(n+1)}$  & $1$ \\ \hline
\end{tabular}}^{Number   \ of \  1 \ in:}  &  &  \left[\begin{array}{c|c|c|c}  a_0 & \frac{n-a}{2}  & \frac{m+b}{2} & 0  \\ \hline 0 & \Delta(d_R) & 0 & \frac{n-a}{2}    \\ \hline 0 & 0 & \Delta(d_S) &  \frac{m+b}{2} \\  \hline 0 & 0 & 0 & 0 \end{array}\right]\end{array}$$
     
If we consider the case (ii)$\wedge$(iii)  then we have:
     
     $$\begin{array}{ccccc}& & &  \overbrace{\begin{tabular}{|c|c|c|}\hline
  $d_R$  & $d_S$ & $d_{(n+1)(n+1)}$ \\ \hline
   $\frac{n+a}{2}$  & $\frac{m-b}{2}$ & $0$ \\ \hline
\end{tabular}}^{Number \ of \ -1 \ in } \\  & &  & & \\   & \overbrace{\begin{tabular}{|c|c|}\hline
  $d_R$  & $\frac{n-a}{2}$ \\ \hline  $d_S$ & $\frac{m+b}{2}$ \\ \hline
   $d_{(n+1)(n+1)}$  & $1$ \\ \hline
\end{tabular}}^{Number   \ of \  1 \ in:}  &  &  \left[\begin{array}{c|c|c|c}  0 & \frac{n+a}{2}  & \frac{m-b}{2} & 0  \\ \hline 0 & \Delta(d_R) & 0 & \frac{n+a}{2}    \\ \hline 0 & 0 & \Delta(d_S) &  \frac{m-b}{2} \\  \hline 0 & 0 & 0 & 0 \end{array}\right]\end{array}$$
and finally, If we consider (ii)$\wedge$(iv) then we have
    
    $$\begin{array}{ccccc}& & &  \overbrace{\begin{tabular}{|c|c|c|}\hline
  $d_R$  & $d_S$ & $d_{(n+1)(n+1)}$ \\ \hline
   $\frac{n+a}{2}$  & $\frac{m+b}{2}$ & $0$ \\ \hline
\end{tabular}}^{Number \ of \ -1 \ in } \\  & &  & & \\   & \overbrace{\begin{tabular}{|c|c|}\hline
  $d_R$  & $\frac{n-a}{2}$ \\ \hline  $d_S$ & $\frac{m-b}{2}$ \\ \hline
   $d_{(n+1)(n+1)}$  & $1$ \\ \hline
\end{tabular}}^{Number   \ of \  1 \ in:}  &  &  \left[\begin{array}{c|c|c|c}  0 & \frac{n+a}{2}  & \frac{m+b}{2} & 0  \\ \hline 0 & \Delta(d_R) & 0 & \frac{n+a}{2}    \\ \hline 0 & 0 & \Delta(d_S) &  \frac{m+b}{2} \\  \hline 0 & 0 & 0 & 0 \end{array}\right]\end{array}$$

Finally, in this case we have

$$\binom{n}{\frac{n-a}{2}}\binom{m}{\frac{m-b}{2}}q^{\Delta(d_A)+\Delta(d_D)}\cdot \left( q^{m+b}+q^{m-b}\right)\cdot \left(q^{n+a}+q^{n-a} \right),$$
involutions with diagonal $1\times d_R \times d_S \times 1$.

  \item[(b)] If $d_{(n+1)(n+1)}=-1$, then the entry correspondent to $\theta_{(n+1)}$ is independent and we have $q$ possibilities in this position. If we consider (i)$\wedge$(iii) then there is $\frac{n-a}{2}$ independent entries into sub-matrices $\alpha$ and $\beta$ and we have $\frac{n+a}{2}$ independent entries into sub-matrix $\beta$ as well  we have  $\frac{m-b}{2}$  independent entries into sub-matrix $\theta$ and $\frac{m+b}{2}$ independents entries  into sub-matrix $d$.
     In the matrix form:
    
    $$\begin{array}{ccccc}& & &  \overbrace{\begin{tabular}{|c|c|c|}\hline
  $d_R$  & $d_S$ & $d_{(n+1)(n+1)}$ \\ \hline
   $\frac{n-a}{2}$  & $\frac{m-b}{2}$ & $1$ \\ \hline
\end{tabular}}^{Number \ of \ -1 \ in } \\  & &  & & \\   & \overbrace{\begin{tabular}{|c|c|}\hline
  $d_R$  & $\frac{n+a}{2}$ \\ \hline  $d_S$ & $\frac{m+b}{2}$ \\ \hline
   $d_{(n+1)(n+1)}$  & $0$ \\ \hline
\end{tabular}}^{Number   \ of \  1 \ in:}  &  &  \left[\begin{array}{c|c|c|c}  a_0 & \frac{n-a}{2}  & \frac{m-b}{2} & 1 \\ \hline 0 & \Delta(d_R) & 0 & \frac{n-a}{2}    \\ \hline 0 & 0 & \Delta(d_S) &  \frac{m+b}{2} \\  \hline 0 & 0 & 0 & 0 \end{array}\right]\end{array}$$

    If we consider the case (ii)$\wedge$(iii)  then there are $\frac{n+a}{2}$ independent entries into sub-matrix $\alpha$ and $\frac{n-a}{2}$ independent entries into sub-matrix $\beta$ as well we have $\frac{m-b}{2}$ into sub-matrix $\theta$ and $\frac{m+b}{2}$ independent entries into sub-matrix $d$. In the matrix form:
    
     $$\begin{array}{ccccc}& & &  \overbrace{\begin{tabular}{|c|c|c|}\hline
  $d_R$  & $d_S$ & $d_{(n+1)(n+1)}$ \\ \hline
   $\frac{n+a}{2}$  & $\frac{m-b}{2}$ & $1$ \\ \hline
\end{tabular}}^{Number \ of \ -1 \ in } \\  & &  & & \\   & \overbrace{\begin{tabular}{|c|c|}\hline
  $d_R$  & $\frac{n-a}{2}$ \\ \hline  $d_S$ & $\frac{m+b}{2}$ \\ \hline
   $d_{(n+1)(n+1)}$  & $0$ \\ \hline
\end{tabular}}^{Number   \ of \  1 \ in:}  &  &  \left[\begin{array}{c|c|c|c}  0 & \frac{n+a}{2}  & \frac{m-b}{2} & 1 \\ \hline 0 & \Delta(d_R) & 0 & \frac{n-a}{2}    \\ \hline 0 & 0 & \Delta(d_S) &  \frac{m+b}{2} \\  \hline 0 & 0 & 0 & 0 \end{array}\right]\end{array}$$

If we consider (i)$\wedge$(iv) then we have the matrix form:

$$\begin{array}{ccccc}& & &  \overbrace{\begin{tabular}{|c|c|c|}\hline
  $d_R$  & $d_S$ & $d_{(n+1)(n+1)}$ \\ \hline
   $\frac{n-a}{2}$  & $\frac{m+b}{2}$ & $1$ \\ \hline
\end{tabular}}^{Number \ of \ -1 \ in } \\  & &  & & \\   & \overbrace{\begin{tabular}{|c|c|}\hline
  $d_R$  & $\frac{n+a}{2}$ \\ \hline  $d_S$ & $\frac{m-b}{2}$ \\ \hline
   $d_{(n+1)(n+1)}$  & $0$ \\ \hline
\end{tabular}}^{Number   \ of \  1 \ in:}  &  &  \left[\begin{array}{c|c|c|c}  a_0 & \frac{n-a}{2}  & \frac{m+b}{2} & 1 \\ \hline 0 & \Delta(d_R) & 0 & \frac{n-a}{2}    \\ \hline 0 & 0 & \Delta(d_S) &  \frac{m-b}{2} \\  \hline 0 & 0 & 0 & 0 \end{array}\right]\end{array}$$

and finally, consider (ii)$\wedge$(iv) we have the matrix form:

  $$\begin{array}{ccccc}& & &  \overbrace{\begin{tabular}{|c|c|c|}\hline
  $d_R$  & $d_S$ & $d_{(n+1)(n+1)}$ \\ \hline
   $\frac{n+a}{2}$  & $\frac{m+b}{2}$ & $1$ \\ \hline
\end{tabular}}^{Number \ of \ -1 \ in } \\  & &  & & \\   & \overbrace{\begin{tabular}{|c|c|}\hline
  $d_R$  & $\frac{n-a}{2}$ \\ \hline  $d_S$ & $\frac{m-b}{2}$ \\ \hline
   $d_{(n+1)(n+1)}$  & $0$ \\ \hline
\end{tabular}}^{Number   \ of \  1 \ in:}  &  &  \left[\begin{array}{c|c|c|c}  0 & \frac{n+a}{2}  & \frac{m+b}{2} & 1 \\ \hline 0 & \Delta(d_R) & 0 & \frac{n-a}{2}    \\ \hline 0 & 0 & \Delta(d_S) &  \frac{m-b}{2} \\  \hline 0 & 0 & 0 & 0 \end{array}\right]\end{array}$$
    \end{itemize}

If $a_0=-1$, we can observe that we have the all similar cases, combining all we conclude that:

 $$\mathcal{P}(X,\mathbb{K})=\displaystyle 2\sum_{\substack{1\leq a \leq n \\ a \ odd}}\sum_{\substack{1 \leq b \leq m \\ m \ odd}}\binom{n}{\frac{n-a}{2}}\binom{m}{\frac{m-b}{2}}q^{\Delta(d_R)+\Delta(d_S)}\cdot \mathcal{F}(a,b)
,$$
with $$\mathcal{F}(a,b)=\left(q^{m+b}+q^{m-b}\right)\cdot \left(q^{n+a}+q^{n-a}\right)+4q^{n+m+1}.$$
 
In the case, for example, that $n$ is even and $m$ is odd we have add the situation
\begin{itemize}
    \item [(A)] There is $\frac{n}{2}$ elements $1$ in $d_R$.
\end{itemize}

We have consider the cases : \begin{itemize}
    \item [(a)] ($a_0=1$)$\wedge$(A)$\wedge$(iii)$\wedge$($d_{(n+1)(n+1)}=1$) then, their matrix form is:

$$\begin{array}{ccccc}& & &  \overbrace{\begin{tabular}{|c|c|c|}\hline
  $d_R$  & $d_S$ & $d_{(n+1)(n+1)}$ \\ \hline
   $\frac{n}{2}$  & $\frac{m-b}{2}$ & $0$ \\ \hline
\end{tabular}}^{Number \ of \ -1 \ in } \\  & &  & & \\   & \overbrace{\begin{tabular}{|c|c|}\hline
  $d_R$  & $\frac{n}{2}$ \\ \hline  $d_S$ & $\frac{m+b}{2}$ \\ \hline
   $d_{(n+1)(n+1)}$  & $1$ \\ \hline
\end{tabular}}^{Number   \ of \  1 \ in:}  &  &  \left[\begin{array}{c|c|c|c}  0 & \frac{n}{2}  & \frac{m-b}{2} & 1 \\ \hline 0 & \Delta(d_R) & 0 & \frac{n}{2}    \\ \hline 0 & 0 & \Delta(d_S) &  \frac{m-b}{2} \\  \hline 0 & 0 & 0 & 0 \end{array}\right]\end{array}$$

\item[(b)] ($a_0=1$)$\wedge$(A)$\wedge$(iv)$\wedge$($d_{(n+1)(n+1)}=1$) then

$$\begin{array}{ccccc}& & &  \overbrace{\begin{tabular}{|c|c|c|}\hline
  $d_R$  & $d_S$ & $d_{(n+1)(n+1)}$ \\ \hline
   $\frac{n}{2}$  & $\frac{m+b}{2}$ & $0$ \\ \hline
\end{tabular}}^{Number \ of \ -1 \ in } \\  & &  & & \\   & \overbrace{\begin{tabular}{|c|c|}\hline
  $d_R$  & $\frac{n}{2}$ \\ \hline  $d_S$ & $\frac{m-b}{2}$ \\ \hline
   $d_{(n+1)(n+1)}$  & $1$ \\ \hline
\end{tabular}}^{Number   \ of \  1 \ in:}  &  &  \left[\begin{array}{c|c|c|c}  0 & \frac{n}{2}  & \frac{m+b}{2} & 1 \\ \hline 0 & \Delta(d_R) & 0 & \frac{n}{2}    \\ \hline 0 & 0 & \Delta(d_S) &  \frac{m+b}{2} \\  \hline 0 & 0 & 0 & 0 \end{array}\right]\end{array}$$

\item[(c)] ($a_0=1$)$\wedge$(A)$\wedge$(iii)$\wedge$($d_{(n+1)(n+1)}=-1$) then

$$\begin{array}{ccccc}& & &  \overbrace{\begin{tabular}{|c|c|c|}\hline
  $d_R$  & $d_S$ & $d_{(n+1)(n+1)}$ \\ \hline
   $\frac{n}{2}$  & $\frac{m-b}{2}$ & $1$ \\ \hline
\end{tabular}}^{Number \ of \ -1 \ in } \\  & &  & & \\   & \overbrace{\begin{tabular}{|c|c|}\hline
  $d_R$  & $\frac{n}{2}$ \\ \hline  $d_S$ & $\frac{m+b}{2}$ \\ \hline
   $d_{(n+1)(n+1)}$  & $0$ \\ \hline
\end{tabular}}^{Number   \ of \  1 \ in:}  &  &  \left[\begin{array}{c|c|c|c}  0 & \frac{n}{2}  & \frac{m-b}{2} & 1 \\ \hline 0 & \Delta(d_R) & 0 & \frac{n}{2}    \\ \hline 0 & 0 & \Delta(d_S) &  \frac{m-b}{2} \\  \hline 0 & 0 & 0 & 0 \end{array}\right]\end{array}$$

\item[(d)] ($a_0=1$)$\wedge$(A)$\wedge$(iv)$\wedge$($d_{(n+1)(n+1)}=-1$) then

$$\begin{array}{ccccc}& & &  \overbrace{\begin{tabular}{|c|c|c|}\hline
  $d_R$  & $d_S$ & $d_{(n+1)(n+1)}$ \\ \hline
   $\frac{n}{2}$  & $\frac{m+b}{2}$ & $1$ \\ \hline
\end{tabular}}^{Number \ of \ -1 \ in } \\  & &  & & \\   & \overbrace{\begin{tabular}{|c|c|}\hline
  $d_R$  & $\frac{n}{2}$ \\ \hline  $d_S$ & $\frac{m-b}{2}$ \\ \hline
   $d_{(n+1)(n+1)}$  & $0$ \\ \hline
\end{tabular}}^{Number   \ of \  1 \ in:}  &  &  \left[\begin{array}{c|c|c|c}  0 & \frac{n}{2}  & \frac{m+b}{2} & 1 \\ \hline 0 & \Delta(d_R) & 0 & \frac{n}{2}    \\ \hline 0 & 0 & \Delta(d_S) &  \frac{m-b}{2} \\  \hline 0 & 0 & 0 & 0 \end{array}\right]\end{array}$$

\end{itemize}
Then, combining this cases we have add the term:

$$h(n,m)=  \displaystyle\sum_{\substack{1 \leq b \leq m \\ 2| m-b}} 2\cdot \binom{n}{\frac{n}{2}}\binom{m}{\frac{m-b}{2}}\cdot q^{\Delta(d_R)+\Delta(d_S)}\cdot q^{n+m}\cdot \{q^b+q^{-b}+2q\}.$$

The case $m$ even and $n$ odd is similar and we have include the following situation:
\begin{itemize}
    \item [(B)] There is $\frac{m}{2}$ elements $1$ in the diagonal $d_S$.
\end{itemize}

In the cases that all are odd number, we have consider the following situations 
\begin{itemize}
    \item ($a_0=1$)$\wedge$ (A) $\wedge$ (iii)$\wedge$ ($d_{(n+1)(n+1)}=1$),
    \item ($a_0=1$)$\wedge$ (A) $\wedge$ (iii)$\wedge$ ($d_{(n+1)(n+1)}=-1$),
    \item ($a_0=1$)$\wedge$ (A) $\wedge$ (iv)$\wedge$ ($d_{(n+1)(n+1)}=1$),
    \item ($a_0=1$)$\wedge$ (A) $\wedge$ (iv)$\wedge$ ($d_{(n+1)(n+1)}=-1$),
    \item ($a_0=1$)$\wedge$ (i) $\wedge$ (B)$\wedge$ ($d_{(n+1)(n+1)}=1$),
    \item
($a_0=1$)$\wedge$ (i) $\wedge$ (B)$\wedge$ ($d_{(n+1)(n+1)}=-1$),
\item
($a_0=1$)$\wedge$ (ii) $\wedge$ (B)$\wedge$ ($d_{(n+1)(n+1)}=1$),
\item
($a_0=1$)$\wedge$ (ii) $\wedge$ (B)$\wedge$ ($d_{(n+1)(n+1)}=-1$),
\item
($a_0=1$)$\wedge$ (A) $\wedge$ (B)$\wedge$ ($d_{(n+1)(n+1)}=1$)
\item 
and finally 
($a_0=1$)$\wedge$ (A) $\wedge$ (B)$\wedge$ ($d_{(n+1)(n+1)}=-1$).
\end{itemize}
  
 We analyzing the last two situations:
 \begin{itemize}
     \item [(1)] If ($a_0=1$)$\wedge$ (A) $\wedge$ (B)$\wedge$ ($d_{(n+1)(n+1)}=1$) the we have the following matrix representation:
     
  $$\begin{array}{ccccc}& & &  \overbrace{\begin{tabular}{|c|c|c|}\hline
  $d_R$  & $d_S$ & $d_{(n+1)(n+1)}$ \\ \hline
   $\frac{n}{2}$  & $\frac{m}{2}$ & $0$ \\ \hline
\end{tabular}}^{Number \ of \ -1 \ in } \\  & &  & & \\   & \overbrace{\begin{tabular}{|c|c|}\hline
  $d_R$  & $\frac{n}{2}$ \\ \hline  $d_S$ & $\frac{m}{2}$ \\ \hline
   $d_{(n+1)(n+1)}$  & $1$ \\ \hline
\end{tabular}}^{Number   \ of \  1 \ in:}  &  &  \left[\begin{array}{c|c|c|c}  0 & \frac{n}{2}  & \frac{m}{2} & 0 \\ \hline 0 & \Delta(d_R) & 0 & \frac{n}{2}    \\ \hline 0 & 0 & \Delta(d_S) &  \frac{m}{2} \\  \hline 0 & 0 & 0 & 0 \end{array}\right]\end{array}$$   
     
     \item[(2)] If ($a_0=1$)$\wedge$ (A) $\wedge$ (B)$\wedge$ ($d_{(n+1)(n+1)}=-1$) then we have
    $$\begin{array}{ccccc}& & &  \overbrace{\begin{tabular}{|c|c|c|}\hline
  $d_R$  & $d_S$ & $d_{(n+1)(n+1)}$ \\ \hline
   $\frac{n}{2}$  & $\frac{m}{2}$ & $1$ \\ \hline
\end{tabular}}^{Number \ of \ -1 \ in } \\  & &  & & \\   & \overbrace{\begin{tabular}{|c|c|}\hline
  $d_R$  & $\frac{n}{2}$ \\ \hline  $d_S$ & $\frac{m}{2}$ \\ \hline
   $d_{(n+1)(n+1)}$  & $0$ \\ \hline
\end{tabular}}^{Number   \ of \  1 \ in:}  &  &  \left[\begin{array}{c|c|c|c}  0 & \frac{n}{2}  & \frac{m}{2} & 1 \\ \hline 0 & \Delta(d_R) & 0 & \frac{n}{2}    \\ \hline 0 & 0 & \Delta(d_S) &  \frac{m}{2} \\  \hline 0 & 0 & 0 & 0 \end{array}\right]\end{array}$$

 \end{itemize}
 And finally, combining all we have add the term 
 
 $$h(n,m)= \cdots +\displaystyle 2 \cdot \binom{n}{\frac{n}{2}} \binom{m}{\frac{m}{2}} q^{\frac{1}{4}(n^2+m^2)}\cdot q^{n+m}\cdot (q+1).$$
  
\end{proof}

\begin{proof}[Proof of Theorem \ref{t3}] The Hasse diagram for this poset is
\begin{figure}[H]
\centering
{\resizebox*{4cm}{!}{\includegraphics{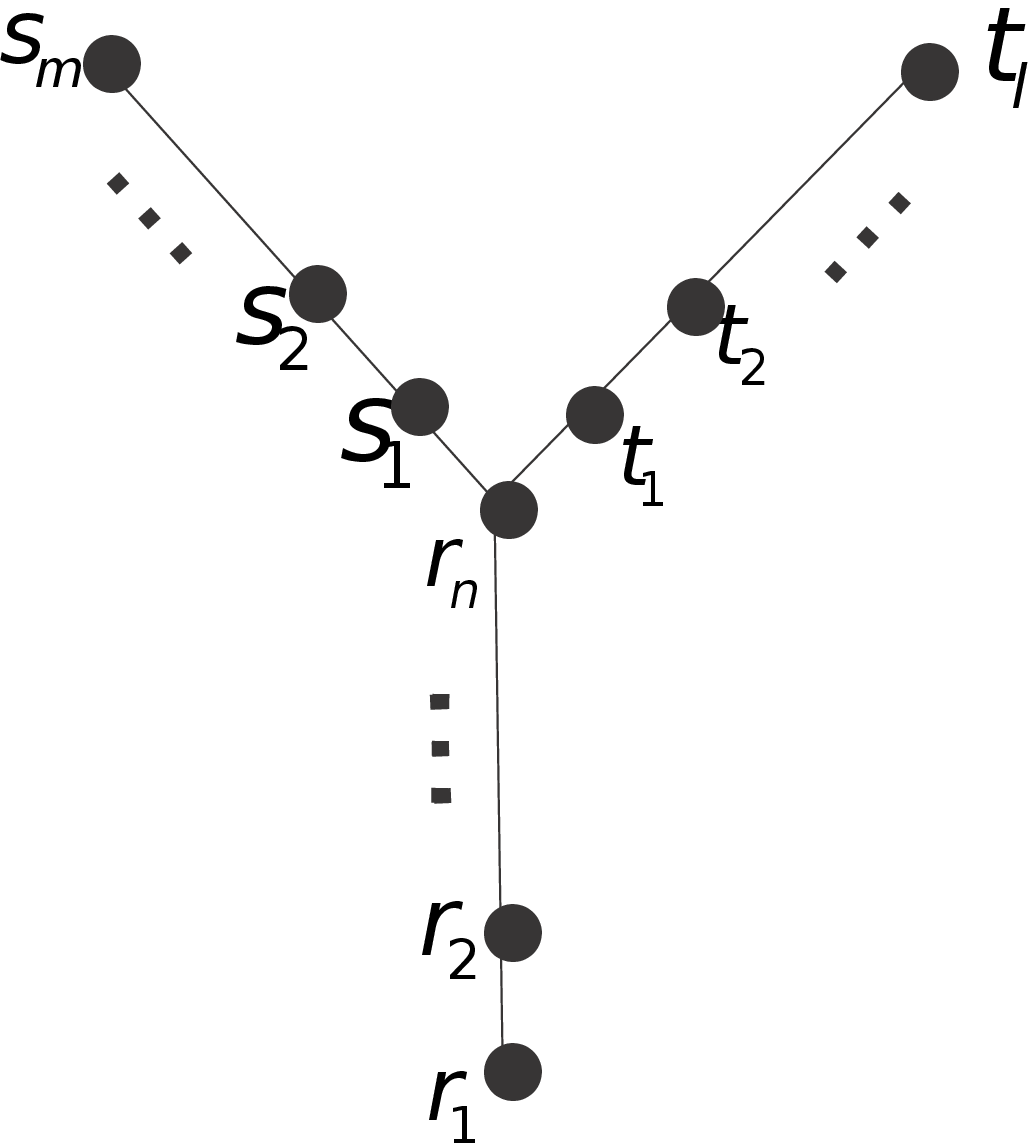}}}
\label{graph7}
\end{figure}
The elements in the incidence algebra $I(Y,\mathbb{K})$ are in the matrix form:

$$\left[\begin{array}{c|c|c} R & *_1 & *_2 \\ \hline 0 & S & 0 \\ \hline 0 & 0 & T\end{array}\right]$$
with, $R, S$ and $T$ are triangular superior matrix and $*_1$ and $*_2$ are a full matrix. Denote by $d_R=(\alpha_1,\alpha_2, \cdots, \alpha_n), d_S=(\beta_1,\beta_2,\cdots, \beta_m)$ and $d_T=(\gamma_1,\gamma_2,\cdots, \gamma_l)$ the diagonals of the matrix $R,S$ and $T$ respectively. Suppose that $n,m$ and $l$ are odd and we have $\left|\sum^{n}_{i=1} \alpha_i\right|=a$, $\left|\sum^{m}_{j=1} \beta_j\right|=b$ and $\left|\sum^{l}_{h=1} \gamma_h\right|=c$, then, in $d_R$:
\begin{itemize}
    \item [(i)] $d_R$  consist either of $\frac{n+a}{2}$ elements equal to $1$ and $\frac{n-a}{2}$ elements equal to $-1$, or
    \item[(ii)] $\frac{n-a}{2}$ elements equal to $1$ and $\frac{n+a}{2}$ elements equal to $-1$.
\end{itemize}
In $d_S$ we have:
\begin{itemize}
    \item [(iii)] Consist of $\frac{m+b}{2}$ elements equal to $1$ and $\frac{m-b}{2}$ elements equal to $-1$, or
    \item[(iv)] Consist of$\frac{m-b}{2}$ elements equal to $1$ and $\frac{m+b}{2}$ elements equal to $-1$.
\end{itemize}
and, in $d_T$ we have that:
\begin{itemize}
    \item [(v)] Consist of $\frac{l+c}{2}$ elements equal to $1$ and $\frac{l-c}{2}$ elements equal to $-1$, or
    \item[(vi)] Consist of $\frac{l-c}{2}$ elements equal to $1$ and
    $\frac{l+c}{2}$ elements equal to $-1$.
\end{itemize}

Remember that there are $\Delta (d_R)$, $\Delta(d_S)$ and $\Delta(d_T)$ independent entries in the sub-matrices $R,S$ and $T$ respectively.Then we have eight situations

\begin{itemize}
\item[(a)] Consider (i)$\wedge$(iii)$\wedge$(v) then we have the matrix representation:

$$\begin{array}{ccccc}& & &  \overbrace{\begin{tabular}{|c|c|c|}\hline $d_R$ & $d_S$ & $d_T$ \\ \hline  $\frac{n-a}{2}$ & $\frac{m-b}{2}$ & $\frac{l-c}{2}$ \\ \hline  \end{tabular}}^{Number \ of \ -1 \ in:} \\  & &  & & \\   & \overbrace{\begin{tabular}{|c|c|} \hline $d_R$ & $\frac{n+a}{2}$ \\ \hline $d_S$ & $\frac{m+b}{2}$ \\  \hline $d_T$ & $\frac{l+c}{2}$ \\ \hline \end{tabular}}^{Number   \ of \  1 \ in:}  &  &  \left[\begin{array}{c|c|c} \Delta(d_R) & (\frac{n+a}{2})(\frac{m-b}{2})+(\frac{n-a}{2})(\frac{m+b}{2}) & (\frac{n+a}{2})(\frac{l-c}{2})+(\frac{n-a}{2})(\frac{l+c}{2})   \\ \hline 0 & \Delta(d_S) & 0 \\ \hline 0 & 0 & \Delta(d_T)\end{array}\right]\end{array}$$
then we have

$$\binom{n}{\frac{n-a}{2}}\binom{m}{\frac{m-b}{2}}\binom{l}{\frac{l-c}{2}} q^{\Delta(d_R)+\Delta(d_S)+\Delta(d_T)+ \frac{1}{2}(nm-ab)+ \frac{1}{2}(nl-ac)},$$ involutions in this case.

\item[(b)] If we consider (i)$\wedge$(iii)$\wedge$(vi) then, in the matrix representation:

$$\begin{array}{ccccc}& & &  \overbrace{\begin{tabular}{|c|c|c|} \hline $d_R$ & $d_S$ & $d_T$ \\ \hline $\frac{n-a}{2}$ & $\frac{m-b}{2}$ & $\frac{l+c}{2}$ \\ \hline \end{tabular}}^{Number \ of \ -1 \ in: } \\  & &  & & \\   & \overbrace{\begin{tabular}{|c|c|c|}\hline $d_T$ & $\frac{n+a}{2}$ \\ \hline $d_S$& $\frac{m+b}{2}$ \\  \hline $d_T$ & $\frac{l-c}{2}$ \\ \hline \end{tabular}}^{Number   \ of \  1 \ in :}  &  &  \left[\begin{array}{c|c|c} \Delta(d_R) & (\frac{n+a}{2})(\frac{m-b}{2})+(\frac{n-a}{2})(\frac{m+b}{2}) & (\frac{n+a}{2})(\frac{l+c}{2})+(\frac{n-a}{2})(\frac{l-c}{2})   \\ \hline 0 & \Delta(d_S) & 0 \\ \hline 0 & 0 & \Delta(d_T)\end{array}\right]\end{array}$$
then in this case we have

$$\binom{n}{\frac{n-a}{2}}\binom{m}{\frac{m-b}{2}}\binom{l}{\frac{l-c}{2}} q^{\Delta(d_R)+\Delta(d_S)+\Delta(d_T)+ \frac{1}{2}(nm-ab)+ \frac{1}{2}(nl+ac)},$$ involutions.

\item[(c)] If (i)$\wedge$(iv)$\wedge$(v) we have:

$$\begin{array}{ccccc}& & &  \overbrace{\begin{tabular}{|c|c|c|}\hline $d_T$ & $d_S$ & $d_T$ \\ \hline  $\frac{n-a}{2}$ & $\frac{m+b}{2}$ & $\frac{l-c}{2}$ \\ \hline\end{tabular}}^{Number \ of \ -1:} \\  & &  & & \\   & \overbrace{\begin{tabular}{|c|c|} \hline $d_T$ & $ \frac{n+a}{2}$ \\ \hline  $d_S$ & $\frac{m-b}{2}$ \\  \hline $d_T$ & $\frac{l+c}{2}$ \\ \hline  \end{tabular}}^{Number   \ of \  1 \ in:}  &  &  \left[\begin{array}{c|c|c} \Delta(d_R) & (\frac{n+a}{2})(\frac{m+b}{2})+(\frac{n-a}{2})(\frac{m-b}{2}) & (\frac{n+a}{2})(\frac{l-c}{2})+(\frac{n-a}{2})(\frac{l+c}{2})   \\ \hline 0 & \Delta(d_S) & 0 \\ \hline 0 & 0 & \Delta(d_T)\end{array}\right]\end{array}$$
Here, we have

$$\binom{n}{\frac{n-a}{2}}\binom{m}{\frac{m-b}{2}}\binom{l}{\frac{l-c}{2}} q^{\Delta(d_R)+\Delta(d_S)+\Delta(d_T)+ \frac{1}{2}(nm+ab)+ \frac{1}{2}(nl-ac)},$$ involutions.

\item[(d)] If (i)$\wedge$(iv)$\wedge$(vi) we have

$$\begin{array}{ccccc}& & &  \overbrace{\begin{tabular}{|c|c|c|} \hline $d_R$ & $d_S$ & $d_T$ \\ \hline $ \frac{n-a}{2}$  & $\frac{m+b}{2}$ &$ \frac{l+c}{2}$ \\ \hline \end{tabular}}^{Number \ of \ -1} \\  & &  & & \\   & \overbrace{\begin{tabular}{|c|c|}  \hline $d_R$ & $\frac{n+a}{2}$ \\ \hline $d_S$ & $\frac{m-b}{2}$ \\  \hline $d_T$ & $\frac{l-c}{2}$ \\ \hline\end{tabular}}^{Number   \ of \  1:}  &  &  \left[\begin{array}{c|c|c} \Delta(d_R) & (\frac{n+a}{2})(\frac{m+b}{2})+(\frac{n-a}{2})(\frac{m-b}{2}) & (\frac{n+a}{2})(\frac{l+c}{2})+(\frac{n-a}{2})(\frac{l-c}{2})   \\ \hline 0 & \Delta(d_S) & 0 \\ \hline 0 & 0 & \Delta(d_T)\end{array}\right]\end{array}$$
then we have

$$\binom{n}{\frac{n-a}{2}}\binom{m}{\frac{m-b}{2}}\binom{l}{\frac{l-c}{2}} q^{\Delta(d_R)+\Delta(d_S)+\Delta(d_T)+ \frac{1}{2}(nm+ab)+ \frac{1}{2}(nl+ac)},$$ involutions in this case.

\item[(e)] If (ii)$\wedge$(iii)$\wedge$(v) we have
$$\begin{array}{ccccc}& & &  \overbrace{\begin{tabular}{|c|c|c|} \hline $d_R$ & $d_S$ & $d_T$ \\ \hline $\frac{n+a}{2}$ & $\frac{m-b}{2}$ & $\frac{l-c}{2}$ \\ \hline \end{tabular}}^{Number \ of \ -1 \ in:} \\  & &  & & \\   & \overbrace{\begin{tabular}{|c|c|}\hline $d_R$ & $\frac{n-a}{2}$ \\ \hline $d_S$ & $\frac{m+b}{2}$ \\  \hline $d_T$ & $\frac{l+c}{2}$ \\ \hline \end{tabular}}^{Number   \ of \  1 \ in:}  &  &  \left[\begin{array}{c|c|c} \Delta(d_R) & (\frac{n-a}{2})(\frac{m-b}{2})+(\frac{n+a}{2})(\frac{m+b}{2}) & (\frac{n-a}{2})(\frac{l-c}{2})+(\frac{n+a}{2})(\frac{l+c}{2})   \\ \hline 0 & \Delta(d_S) & 0 \\ \hline 0 & 0 & \Delta(d_T)\end{array}\right]\end{array}$$
then we have

$$\binom{n}{\frac{n-a}{2}}\binom{m}{\frac{m-b}{2}}\binom{l}{\frac{l-c}{2}} q^{\Delta(d_R)+\Delta(d_S)+\Delta(d_T)+ \frac{1}{2}(nm+ab)+ \frac{1}{2}(nl+ac)}$$ involutions.

\item[(f)] If (ii)$\wedge$(iii)$\wedge$(vi) we have

$$\begin{array}{ccccc}& & &  \overbrace{\begin{tabular}{|c|c|c|}\hline $d_R$ & $d_S$ & $d_T$ \\ \hline $\frac{n+a}{2}$ & $\frac{m-b}{2}$ & $\frac{l+c}{2}$ \\ \hline \end{tabular}}^{Number \ of \ -1:} \\  & &  & & \\   & \overbrace{\begin{tabular}{|c|c|} \hline $d_T$ & $ \frac{n-a}{2}$ \\ \hline $d_S$ & $\frac{m+b}{2}$ \\  \hline $d_T$ & $\frac{l-c}{2}$ \\ \hline  \end{tabular}}^{Number   \ of \  1 :}  &  &  \left[\begin{array}{c|c|c} \Delta(d_R) & (\frac{n-a}{2})(\frac{m-b}{2})+(\frac{n+a}{2})(\frac{m+b}{2}) & (\frac{n-a}{2})(\frac{l+c}{2})+(\frac{n+a}{2})(\frac{l-c}{2})   \\ \hline 0 & \Delta(d_S) & 0 \\ \hline 0 & 0 & \Delta(d_T)\end{array}\right]\end{array}$$
then we have

$$\binom{n}{\frac{n-a}{2}}\binom{m}{\frac{m-b}{2}}\binom{l}{\frac{l-c}{2}} q^{\Delta(d_R)+\Delta(d_S)+\Delta(d_T)+ \frac{1}{2}(nm+ab)+ \frac{1}{2}(nl-ac)},$$ involutions here.

\item[(g)] If (ii)$\wedge$(iv)$\wedge$(v) we have

$$\begin{array}{ccccc}& & &  \overbrace{\begin{tabular}{|c|c|c|}\hline $d_R$ & $d_S$ & $d_T$ \\ \hline $ \frac{n+a}{2}$ & $\frac{m+b}{2}$ & $\frac{l-c}{2}$ \\ \hline  \end{tabular}}^{Number \ of \ -1 \ in: } \\  & &  & & \\   & \overbrace{\begin{tabular}{|c|c|} \hline $d_R$ & $ \frac{n-a}{2}$ \\ \hline $d_S$ & $\frac{m-b}{2}$ \\  \hline $d_T$ & $ \frac{l+c}{2}$ \\ \hline \end{tabular}}^{Number   \ of \  1 in: }  &  &  \left[\begin{array}{c|c|c} \Delta(d_R) & (\frac{n-a}{2})(\frac{m+b}{2})+(\frac{n+a}{2})(\frac{m-b}{2}) & (\frac{n-a}{2})(\frac{l-c}{2})+(\frac{n+a}{2})(\frac{l+c}{2})   \\ \hline 0 & \Delta(d_S) & 0 \\ \hline 0 & 0 & \Delta(d_T)\end{array}\right]\end{array}$$
Here we have:

$$\binom{n}{\frac{n-a}{2}}\binom{m}{\frac{m-b}{2}}\binom{l}{\frac{l-c}{2}} q^{\Delta(d_R)+\Delta(d_S)+\Delta(d_T)+ \frac{1}{2}(nm-ab)+ \frac{1}{2}(nl+ac)},$$ involutions.

\item[(h)] Finally, if (ii)$\wedge$(iv)$\wedge$(vi) we have:
$$\begin{array}{c p{0,1cm} c} & &  \overbrace{\begin{tabular}{|c|c|c|} \hline $d_R$ & $d_S$ & $d_T$ \\ \hline $ \frac{n+a}{2}$ & $\frac{m+b}{2}$ & $ \frac{l+c}{2}$ \\ \hline  \end{tabular}}^{Number \ of \ -1 \ in:} \\  & &    \\
 \overbrace{\begin{tabular}{|c|c|} \hline $d_R$ & $ \frac{n-a}{2}$ \\  \hline $d_S$ & $\frac{m-b}{2}$ \\  \hline $d_T$ & $\frac{l-c}{2}$ \\ \hline \end{tabular}}^{Number   \ of \  1 \ in : }  &  &  \left[\begin{array}{c|c|c} \Delta(d_R) & (\frac{n-a}{2})(\frac{m+b}{2})+(\frac{n+a}{2})(\frac{m-b}{2}) & (\frac{n-a}{2})(\frac{l+c}{2})+(\frac{n+a}{2})(\frac{l-c}{2})   \\ \hline 0 & \Delta(d_S) & 0 \\ \hline 0 & 0 & \Delta(d_T)\end{array}\right]\end{array}$$

then we have

$$\binom{n}{\frac{n-a}{2}}\binom{m}{\frac{m-b}{2}}\binom{l}{\frac{l-c}{2}} q^{\Delta(d_R)+\Delta(d_S)+\Delta(d_T)+ \frac{1}{2}(nm-ab)+ \frac{1}{2}(nl-ac)},$$ involutions.
\end{itemize}

Combining all situations we have:

$$\mathcal{P}(Y,\mathbb{K})=
\sum^{n}_{\substack{1\leq a \leq n\\ 2| (n-a)}  }\sum^{m}_{\substack{1\leq b \leq m\\ 2| (m-b)}}\sum^{l}_{\substack{1\leq c \leq l \\ 2|(l-c)}}\binom{ n}{ \frac{n-a}{2}}\binom{m}{ \frac{m-b}{2}}\binom{l}{ \frac{l-c}{2}}q^{\Delta(d_R)+\Delta(d_S)+\Delta(d_T)}\cdot \mathcal{F}(a,b,c),$$
with
$$\mathcal{F}(a,b,c)=2q^{\frac{1}{2}n(m+l)}\cdot \left\{q^{\frac{1}{2}a(b+c)}+q^{\frac{1}{2}a(b-c)}+q^{\frac{1}{2}[-a(b+c)]}+q^{\frac{1}{2}[-a(b-c)]}\right\}.
$$

In the case, for example, that $n$ is even and $m$ and $l$ are odd then  and we can consider the followings situations:

\begin{itemize}
    \item [(A)] There are $\frac{n}{2}$ elements $1$  in $d_R$.
\end{itemize}

And, we have considered the combinations:
(A)$\wedge$(iii)$\wedge$(v), (A)$\wedge$(iii)$\wedge$(vi), (A)$\wedge$(iv)$\wedge$(v) and (A)$\wedge$(iv)$\wedge$(v), for example in the first case we have the following matrix form:

$$\begin{array}{ccccc}& & &  \overbrace{\begin{tabular}{|c|c|c|}\hline
  $d_R$  & $d_S$ & $d_T$ \\ \hline
   $\frac{n}{2}$  & $\frac{m-b}{2}$ & $\frac{l-c}{2}$ \\ \hline
\end{tabular}}^{Number \ of \ -1 \ in } \\  & &  & & \\   & \overbrace{\begin{tabular}{|c|c|}\hline
  $d_R$  & $\frac{n}{2}$ \\ \hline  $d_S$ & $\frac{m+b}{2}$ \\ \hline
   $d_T$  & $\frac{l+c}{2}$ \\ \hline
\end{tabular}}^{Number   \ of \  1 \ in:}  &  &  \left[\begin{array}{c|c|c} \Delta(d_R) & (\frac{n}{2})(\frac{m-b}{2})+(\frac{n}{2})(\frac{m+b}{2}) & (\frac{n}{2})(\frac{l-c}{2})+(\frac{n}{2})(\frac{l+c}{2})   \\ \hline 0 & \Delta(d_S) & 0 \\ \hline 0 & 0 & \Delta(d_T)\end{array}\right]\end{array}$$

and sumarizing the four combinations we conclude that we have that add the term

$$ \displaystyle\sum_{\substack{1\leq b \leq m \\ 2|(m-b)}} \ \sum_{\substack{1\leq c \leq l \\ 2|(l-c)}}\binom{n}{\frac{n}{2}}\binom{m}{\frac{m-b}{2}}\binom{l}{\frac{l-c}{2}}q^{\left\{\frac{1}{4}\left(n^2+m^2+l^2-b^2-c^2\right)\right\}}\cdot \left\{4 q^{\frac{n(m+l)}{2}}\right\} $$

Similarlly, if $m$ is even and $n$ and $l$ are odd, we have that consider 

\begin{itemize}
    \item [(B)] There are $\frac{m}{2}$ elements $1$ in $d_S$.
\end{itemize}
Then we have the following combinations: (i)$\wedge$(B)$\wedge$(v),
(i)$\wedge$(B)$\wedge$(vi),
(ii)$\wedge$(B)$\wedge$(v) and
(ii)$\wedge$(B)$\wedge$(vi). In this situation we have add the term

$$\displaystyle\sum_{\substack{1\leq a \leq n \\ 2|(n-a)}} \ \sum_{\substack{1\leq c \leq l \\ 2|(l-c)}}\binom{n}{\frac{n-a}{2}}\binom{m}{\frac{m}{2}}\binom{l}{\frac{l-c}{2}}q^{\left\{\frac{1}{4}\left(n^2+m^2+l^2-a^2-c^2\right)\right\}}\cdot 2q^{\frac{nm}{2}}\cdot \left\{ q^{\frac{nl-ac}{2}}+q^{\frac{nl+ac}{2}}\right\}.  $$

And, if $l$ is even and $n$ and $m$ are odd numbers, we have that consider:

\begin{itemize}
    \item [(C)] There are $\frac{l}{2}$ elements $1$ in $d_T$.
\end{itemize}
In this case, we have  the following combinations 
(i)$\wedge$(iii)$\wedge$(C),
(i)$\wedge$(iv)$\wedge$(C),
(ii)$\wedge$(iii)$\wedge$(C) and
(ii)$\wedge$(iv)$\wedge$(C), here we have add the term:

$$\displaystyle\sum_{\substack{1\leq a \leq n \\ 2|(n-a)}} \ \sum_{\substack{1\leq b \leq m \\ 2|(m-b)}}\binom{n}{\frac{n-a}{2}}\binom{m}{\frac{m-b}{2}}\binom{l}{\frac{l}{2}}q^{\left\{\frac{1}{4}\left(n^2+m^2+l^2-a^2-b^2\right)\right\}}\cdot 2q^{\frac{ln}{2}}\cdot \left\{ q^{\frac{nm-ab}{2}}+q^{\frac{nm+ab}{2}}\right\} . $$

In the case that $n$ and $m$ are even numbers and $l$ is odd, we have consider the combinations:
(A)$\wedge$(iii)$\wedge$(v),
(A)$\wedge$(iii)$\wedge$(vi),
(A)$\wedge$(iv)$\wedge$(v),
(A)$\wedge$(iv)$\wedge$(vi),
(i)$\wedge$(B)$\wedge$(v),
(ii)$\wedge$ (B) $\wedge$(v),
(i)$\wedge$(B)$\wedge$(vi),
(ii)$\wedge$(B)$\wedge$(vi),
(A)$\wedge$(B)$\wedge$(v) and
(A)$\wedge$ (B)  $\wedge$(vi). We can observe that the four first combinations are similar to the case that $n$ is only even, and the following four combinations are similar to the case that $m$ is only even. The last two cases are calculated via matrix form, for example in the case that (A)$\wedge$(B)$\wedge$(v) we have:

   $$ \begin{array}{ccccc}& & &  \overbrace{\begin{tabular}{|c|c|c|}\hline
  $d_R$  & $d_S$ & $d_T$ \\ \hline
   $\frac{n}{2}$  & $\frac{m}{2}$ & $\frac{l-c}{2}$ \\ \hline
\end{tabular}}^{Number \ of \ -1 \ in } \\  & &  & & \\   & \overbrace{\begin{tabular}{|c|c|}\hline
  $d_R$  & $\frac{n}{2}$ \\ \hline  $d_S$ & $\frac{m}{2}$ \\ \hline
   $d_T$  & $\frac{l+c}{2}$ \\ \hline
\end{tabular}}^{Number   \ of \  1 \ in:}  &  &  \left[\begin{array}{c|c|c} \Delta(d_R) & \frac{nm}{2} & \frac{nl}{2}   \\ \hline 0 & \Delta(d_S) & 0 \\ \hline 0 & 0 & \Delta(d_T)\end{array}\right]\end{array}
$$

and if $(A)\wedge (B) \wedge(vi)$ we have:
  $$ \begin{array}{ccccc}& & &  \overbrace{\begin{tabular}{|c|c|c|}\hline
  $d_R$  & $d_S$ & $d_T$ \\ \hline
   $\frac{n}{2}$  & $\frac{m}{2}$ & $\frac{l+c}{2}$ \\ \hline
\end{tabular}}^{Number \ of \ -1 \ in } \\  & &  & & \\   & \overbrace{\begin{tabular}{|c|c|}\hline
  $d_R$  & $\frac{n}{2}$ \\ \hline  $d_S$ & $\frac{m}{2}$ \\ \hline
   $d_T$  & $\frac{l-c}{2}$ \\ \hline
\end{tabular}}^{Number   \ of \  1 \ in:}  &  &  \left[\begin{array}{c|c|c} \Delta(d_R) & \frac{nm}{2} & \frac{nl}{2}   \\ \hline 0 & \Delta(d_S) & 0 \\ \hline 0 & 0 & \Delta(d_T)\end{array}\right]\end{array}
  $$  

Finally, in this case we have add the terms
$$\begin{array}{l}
\displaystyle\sum_{\substack{1\leq b \leq m \\ 2|(m-b)}} \ \sum_{\substack{1\leq c \leq l \\ 2|(l-c)}}\binom{n}{\frac{n}{2}}\binom{m}{\frac{m-b}{2}}\binom{l}{\frac{l-c}{2}}q^{\left\{\frac{1}{4}\left(n^2+m^2+l^2-b^2-c^2\right)\right\}}\cdot \left\{4 q^{\frac{n(m+l)}{2}}\right\} \\
+
    \displaystyle\sum_{\substack{1\leq a \leq n \\ 2|(n-a)}} \ \sum_{\substack{1\leq c \leq l \\ 2|(l-c)}}\binom{n}{\frac{n-a}{2}}\binom{m}{\frac{m}{2}}\binom{l}{\frac{l-c}{2}}q^{\left\{\frac{1}{4}\left(n^2+m^2+l^2-a^2-c^2\right)\right\}}\cdot 2q^{\frac{nm}{2}}\cdot \left\{ q^{\frac{nl-ac}{2}}+q^{\frac{nl+ac}{2}}\right\}    
    \\
    + 
    \displaystyle\sum_{\substack{1\leq a \leq n \\ 2|(n-a)}} \ \sum_{\substack{1\leq b \leq m \\ 2|(m-b)}}\binom{n}{\frac{n-a}{2}}\binom{m}{\frac{m-b}{2}}\binom{l}{\frac{l}{2}}q^{\left\{\frac{1}{4}\left(n^2+m^2+l^2-a^2-b^2\right)\right\}}\cdot 2q^{\frac{ln}{2}}\cdot \left\{ q^{\frac{nm-ab}{2}}+q^{\frac{nm+ab}{2}}\right\} \\    
    +
    
    \displaystyle\sum_{\substack{1\leq c \leq l \\ 2|(l-c)}} \ \binom{n}{\frac{n}{2}}\binom{m}{\frac{m}{2}}\binom{l}{\frac{l-c}{2}}q^{\left\{\frac{1}{4}\left(n^2+m^2+l^2-c^2\right)\right\}}\cdot  q^{\frac{n(m+l)}{2}}.
    \end{array}
    $$
    
The cases that $n$ and $l$ are odd numbers or $m$ and $l$ are odd numbers are similar.
Finally, in the case that all numbers are even, we have consider all combinations in each situations, only the last situation is when 
\begin{itemize}
    \item [(D)] There are $\frac{n}{2}$ elements $1$ in $d_R$, $\frac{m}{2}$ elements $1$ in $d_S$ and $\frac{l}{2}$ elements $1$ in $d_S$.
\end{itemize}
in the matrix form:

$$\begin{array}{ccccc}& & &  \overbrace{\begin{tabular}{|c|c|c|}\hline
  $d_R$  & $d_S$ & $d_T$ \\ \hline
   $\frac{n}{2}$  & $\frac{m}{2}$ & $\frac{l}{2}$ \\ \hline
\end{tabular}}^{Number \ of \ -1 \ in } \\  & &  & & \\   & \overbrace{\begin{tabular}{|c|c|}\hline
  $d_R$  & $\frac{n}{2}$ \\ \hline  $d_S$ & $\frac{m}{2}$ \\ \hline
   $d_T$  & $\frac{l}{2}$ \\ \hline
\end{tabular}}^{Number   \ of \  1 \ in:}  &  &  \left[\begin{array}{c|c|c} \Delta(d_R) & \frac{nm}{2} & \frac{nl}{2}   \\ \hline 0 & \Delta(d_S) & 0 \\ \hline 0 & 0 & \Delta(d_T)\end{array}\right]\end{array}$$
and we have add the all previous sums and the term

$$\cdots + \displaystyle \binom{n}{\frac{n}{2}}\binom{m}{\frac{m}{2}}\binom{l}{\frac{l}{2}}q^{\left\{\frac{1}{4}\left(n^2+m^2+l^2\right)\right\}}\cdot  q^{\frac{n(m+l)}{2}}. $$
\end{proof}


\section{Tables presentations}
In the Table 2, we present the values of $\mathcal{P}(n+m+1,\mathbb{K})$ for $1\leq m \leq 14$. Remember that the values for $\mathcal{I}(n+1,\mathbb{K})$ are calculated in Slowik \cite{slowik1}. In the Table 3, we present the number of involutions on the Rhombuses poset for some   values $m,n$ such that $1 \leq n \leq 5$ and $1 \leq m \leq 5$ and finally, in the Table 4 present the number of involutions in the $Y$ poset for some values of $n,m$ and $l$ such that $1\leq n \leq 3$, $1\leq m \leq 3$ and $1 \leq l \leq 3$.

\begin{table}[H]
\caption{Numbers of involutions the $\mathcal{P}(n+1+m,\mathbb{K})$  in $I(X,\mathbb{K})$ for $|\mathbb{K}|=q$, $0\leq m\leq 14$.}
\centering
\begin{tabular}{ccl}\hline\hline
$m$ &  &  \multicolumn{1}{c}{ $\mathcal{P}(n+m+1,\mathbb{K})$}\\ [0.1ex]
\hline
1 & & $\mathcal{I}(n+1,\mathbb{K})\times (q+1)$\\ 
2 &  &  $\mathcal{I}(n+1,\mathbb{K})\times (3q^2+1)$\\ 
3 &  & $\mathcal{I}(n+1,\mathbb{K})\times (3q^4+4q^3+1)$\\ 
4 &  & $\mathcal{I}(n+1,\mathbb{K})\times (10q^6+5q^4+1)$\\ 
5 &  & $\mathcal{I}(n+1,\mathbb{K})\times (10q^9+15q^8+6q^5+1)$\\ 
6 &  & $\mathcal{I}(n+1,\mathbb{K})\times (35q^{12}+21q^{10}+7q^6+1)$\\ 
7 &  &  $\mathcal{I}(n+1,\mathbb{K})\times (35q^{16}+56q^{15}+28q^{12}+8q^7+1)$\\ 
8 &  & $\mathcal{I}(n+1,\mathbb{K})\times (126q^{20}+84q^{18}+36q^{14}+9q^8+1)$ \\
9 &  & $\mathcal{I}(n+1,\mathbb{K})\times (126q^{25}+210q^{24}+120q^{21}+45q^{16}+10q^9+1)$  \\ 
10 & &  $\mathcal{I}(n+1,\mathbb{K})\times (462q^{30}+330q^{28}+165q^{24}+55q^{18}+11q^{10}+1)$  \\  
11 &  &  $\mathcal{I}(n+1,\mathbb{K})\times (462q^{36}+792q^{35}+495q^{32}+220q^{27}+66q^{20}+12q^{11}+1)$  \\ 
12 &  &  $\mathcal{I}(n+1,\mathbb{K})\times (1716q^{42}+1224q^{40}+715q^{36}+286q^{30}+78q^{22}+13q^{12}+1)$  \\ 
13 &  &  $\mathcal{I}(n+1,\mathbb{K})\times (1716q^{49}+3003q^{48}+2002q^{45}+1001q^{40}+364q^{33}+91q^{24}+ $\\
   &  &  $14q^{13}+1)$  \\ 
14 &  &  $\mathcal{I}(n+1,\mathbb{K})\times (6435q^{56}+5005q^{54}+3003q^{50}+1365q^{44}+455q^{36}+105q^{26}+ $ \\ 
   &  &  $15q^{14}+1)$  \\ [1ex]
\hline
\end{tabular}
\label{}
\end{table}

\begin{table}[H]
\caption{Numbers of involutions the Rhombuses Poset.}
\centering
\begin{tabular}{ccl}\hline\hline
$n$ & $m$  &  \multicolumn{1}{c}{Rhombuses Poset} \\ [0.1ex]
\hline
1 & 1  & $2q^4+8q^3+4q^2+1$\\
1 & 2  & $2q^6+12q^5+10q^4+4q^3+2q^2+2 $\\ 
1 & 3  & $8q^8+24q^7+14q^6+8q^5+6q^4+2q^2+2 $\\ 
1 & 4  & $8q^{11}+38q^{10}+40q^{9}+14q^8+8q^7+8q^6+8q^5+2q^2+2 $\\ 
1& 5  & $30q^{14}+80q^{13}+52q^{12}+40q^{11}+22q^{10}+10q^8 +8q^7+10q^6+2q^2+2$\\ 
2 & 1 & $2q^6+12q^5+10q^4+4q^3+2q^2+2$\\ 
2& 2  & $2q^8+16q^7+24q^6+8q^5+4q^4+8q^3+2$\\ 
2& 3  & $8q^{10}+40q^9+30q^8+20q^7+16q^6+8q^4+2 $\\ 
2&  4 & $8q^{13}+54q^{12}+84q^{11}+32q^{10}+16q^9+38q^8+8q^7+8q^5+2q^4+4q^3+2 $\\ 
2 & 5   & $30q^{16}+140q^{15}+102q^{14}+84q^{13}+70q^{12}+32q^{10}+28q^{9}+8q^{8}+10q^6  $\\
  &     & $+2q^4+4q^3+2$  \\ 
3 &1    & $8q^8+24q^7+14q^6+8q^5+6q^4+2q^2+2 $\\ 
3& 2  & $8q^{10}+40q^9+30q^8+20q^7+16q^6+8q^4+2 $\\ 
3&3 & $32q^{12}+72q^{11}+48q^{10}+48q^9+18q^8+8q^7+16q^6+12q^4+2 $ \\ 
3&  4  &$ 32q^{15}+126q^{14}+120q^{13}+66q^{12}+64q^{11}+26q^{10}+3
2q^9+20q^8+8q^6+ $ \\ 
  &    & $ 8q^5+8q^4+2 $\\ 
3&  5  &$ 120q^{18}+240q^{17}+178q^{16}+200q^{15}+66q^{14}+40q^{13}+70q^{12}+24q^{11}+ $ \\ 
  &    & $52q^{10}+8q^9+$\\
 & & $18q^6+6q^4+2 $\\ 
4 & 1  & $8q^{11}+38q^{10}+40q^{9}+14q^8+8q^7+8q^6+8q^5+2q^2+2 $\\ 
4&  2  &$8q^{13}+54q^{12}+84q^{11}+32q^{10}+16q^9+38q^8+8q^7+8q^5+2q^4+4q^3+2 $\\ 
4&   3 &$ 32q^{15}+126q^{14}+120q^{13}+66q^{12}+64q^{11}+26q^{10}+32q^9+20q^8+8q^6+$ \\
 &     & $ 8q^5+8q^4+2 $\\ 
4&   4 &$ 32q^{18}+184q^{17}+290q^{16}+128q^{15}+64q^{14}+160q^{13}+64q^{12}+32q^{10}+$ \\
 &     & $ 24q^9+28q^8+8q^5+2 $\\ 
4&   5 &$ 120q^{21}+450q^{20}+408q^{19}+274q^{18}+280q^{17}+80q^{16}+128q^{15}+134q^{14}+$ \\
 &     & $32q^{13}+30q^{12}+$\\
 & & $+40q^{11}+30q^{10}+8q^9+14q^8+10q^6+8q^5+2$\\ 
5& 1 &$ 30q^{14}+80q^{13}+52q^{12}+40q^{11}+22q^{10}+10q^8 +8q^7+10q^6+2q^2+2$ \\ 
5 & 2   & $30q^{16}+140q^{15}+102q^{14}+84q^{13}+70q^{12}+32q^{10}+28q^{9}+8q^{8}+10q^6+$ \\
 &      & $ 2q^4+4q^3+2$  \\ 
5&  3  &$ 120q^{18}+240q^{17}+178q^{16}+200q^{15}+66q^{14}+40q^{13}+70q^{12}+24q^{11}+$ \\
 &      & $52q^{10}+8q^9+18q^6+6q^4+2 $\\ 
5&   4 &$ 120q^{21}+450q^{20}+408q^{19}+274q^{18}+280q^{17}+80q^{16}+128q^{15}+134q^{14}+$ \\
 &     & $32q^{13}+30q^{12}+40q^{11}+30q^{10}+8q^9+14q^8+10q^6+8q^5+2 $\\
5&   5 &$ 480q^{24}+840q^{23}+620q^{22}+840q^{21}+202q^{20}+200q^{19}+460q^{18}+220q^{16}+$ \\
 &     & $ 8q^{15}+40q^{13}+70q^{12}+48q^{11}+4q^{10}+20q^6+2 $\\ [1ex]
\hline
\end{tabular}
\label{}
\end{table}

\begin{table}[H]
\caption{Numbers of involutions the $Y$ Poset.}
\centering
\begin{tabular}{cccl} \hline\hline
$n$ & $m$ & $l$ & \multicolumn{1}{c}{$Y$ Poset} \\ [0.1ex]
\hline
1 & 1 & 1 & $2q^2+4q+2$\\ 
1&  1&  2&$ 6q^3+6q^2+2q+2 $\\ 
1&  2&  1&$ 6q^3+6q^2+2q+2 $\\ 
1&  2&  2&$ 18q^4+12q^2+2 $\\ 
1 & 1 & 3 & $6q^5+14q^4+8q^3+2q+2$\\ 
1 & 3 & 1 & $6q^5+14q^4+8q^3+2q+2)$\\ 
2& 1 & 1 & $8q^3 +2 q^4+4q^2+2 $\\ 
2& 1 & 2 & $ 2q^6+12q^5+10q^4+4q^3+2q^2+2$\\
2& 2 &  1& $ 2q^6+12q^5+10q^4+4q^3+2q^2+2$\\ 
2& 2&  2&$ 2q^8+16q^7+24q^6+8q^5+4q^4+8q^3+2 $\\ 
3 & 1 & 1 & $8q^6+12q^5+6q^4+4q^3+2)$\\ 
3& 1 & 3 & $2q^{12}+24q^{11}+42q^{10}+26q^9+12q^8+6q^7+6q^6+6q^5+2q^3+2)$\\
3 &3  &1  & $2q^{12}+24q^{11}+42q^{10}+26q^9+12q^8+6q^7+6q^6+6q^5+2q^3+2)$\\ 
3& 3 & 3 & $2q^{18}+12q^{17}+72q^{16}+144q^{15}+108q^{14}+36q^{13}+36q^{12}+48q^{11}+18q^{10}$ \\
 &   &   & $+4q^9+18q^8+12q^5+2)$\\ [1ex]
\hline
\end{tabular}
\label{}
\end{table}

\section{Conclusions}
By the lemma 1.18 of Gubareni-Hazewinkel \cite{gub1}, we can calculated the number of all involutions on an incidence algebra of an finite poset, but an unique formula to calculate this is complicated by the multiples forms of an finite poset. Fortunately with this article we shown an road to can calculate this.

\bigskip

\addcontentsline{toc}{chapter}{Referências}

\bigskip
\end{document}